\newcommand{\RR}{\mathbb{R}}
\newcommand{\ZZ}{\mathbb{Z}}
\newcommand{\NN}{\mathbb{N}}
\newcommand{\CC}{\mathbb{C}}
\newcommand{\cR}{\mathcal{R}}
\newcommand{\cS}{\mathcal{S}}
\newcommand{\cK}{\mathcal{K}}
\newcommand{\cH}{\mathcal{H}}
\newcommand{\cG}{\mathcal{G}}
\newcommand{\cA}{\mathcal{A}}
\newcommand{\cU}{\mathcal{U}}
\newcommand{\cF}{\mathcal{F}}
\newcommand{\cC}{\mathcal{C}}
\newcommand{\Linop}{\mathcal{L}}
\newcommand{\clos}[1]{\overline{#1}}
\newcommand{\conj}[1]{\overline{#1}}
\newcommand{\restrict}[2]{\ensuremath{#1|_{#2}}}
\newcommand{\emphdef}[1]{\textbf{\boldmath #1\unboldmath}}
\newcommand{\m}{\ensuremath{\text{m}}}
\newcommand{\wto}{\rightharpoonup}
\newcommand{\eps}{\varepsilon}
\DeclareMathOperator{\card}{card}
\DeclareMathOperator{\linspan}{span}
\DeclareMathOperator{\rg}{rg}
\DeclarePairedDelimiter\norm{\lVert}{\rVert}
\DeclarePairedDelimiter\abs{\lvert}{\rvert}
\newcommand{\scalar}[2]{\left(#1\mkern3mu{\mid}\mkern3mu #2\right)}
\newlist{alenum}{enumerate}{1}
\setlist[alenum]{label=\textup{(\alph*)},ref=\textup{(\alph*)},itemsep=0em,topsep=1ex}
\newlist{romanenum}{enumerate}{1}
\setlist[romanenum]{label=\textup{(\roman*)},ref=\textup{(\roman*)},itemsep=0em,topsep=1ex}
\theoremstyle{plain}
\newtheorem{theorem}{Theorem}[section]
\newtheorem{proposition}[theorem]{Proposition}
\newtheorem{corollary}[theorem]{Corollary}
\newtheorem{lemma}[theorem]{Lemma}
\theoremstyle{definition}
\newtheorem{example}[theorem]{Example}
\newtheorem{remark}[theorem]{Remark}
\newtheorem{definition}[theorem]{Definition}
\newcommand\forceparpenalty{\@beginparpenalty=10000}
\title[Nonseparability and von Neumann's theorem]{Nonseparability and von Neumann's theorem\\ for domains of unbounded operators}
\author[A.F.M. ter~Elst]{\sc A.F.M. ter~Elst}
\address{A.F.M. ter~Elst\\Department of Mathematics\\The University of Auckland\\Private bag 92019\\Auckland 1142\\New Zealand}
\email{terelst@math.auckland.ac.nz}
\author[M. Sauter]{\sc Manfred Sauter}
\address{Manfred Sauter\\Institute of Applied Analysis\\Ulm University\\89069 Ulm\\Germany}
\email{manfred.sauter@uni-ulm.de}
\date{April 2015}
\keywords{Operator range, nonseparable Hilbert space, disjoint operator ranges, von Neumann's theorem}
\subjclass[2010]{Primary: 46C07; Secondary: 47A05, 03E10} 
\begin{document}

\begin{abstract}
A classical theorem of von Neumann asserts that every unbounded self-adjoint operator $A$ 
in a \emph{separable} Hilbert space $H$ is unitarily equivalent to an operator $B$ in $H$ such that $D(A)\cap D(B)=\{0\}$.
Equivalently this can be formulated as a property for nonclosed operator ranges.
We will show that von Neumann's theorem does not directly extend to the nonseparable case. 

In this paper we prove a characterisation of the property that an operator range $\cR$ in a general Hilbert space $H$ 
admits a unitary operator $U$ such that $U\cR\cap\cR=\{0\}$. This 
allows us to study stability properties of operator ranges with the
aforementioned property.
\end{abstract}

\begingroup
\makeatletter
\patchcmd{\@settitle}{\uppercasenonmath\@title}{\large}{}{}
\patchcmd{\@setauthors}{\MakeUppercase}{}{}{}
\makeatother
\maketitle
\endgroup

\section{Introduction}

In classical works like~\cite{vNeu30:herm-op} or~\cite{Nagy42}, Hilbert spaces
were introduced as \emph{separable} complete inner product spaces. 
The notion of `separability' is due to Fr\'echet~\cite[p.\,23]{Fre06}, 
likely originating in the property that the rationals `separate' the reals.
Early works that generalise the Hilbert space theory to the nonseparable case are~\cite{Loew34} and~\cite{Rel34}.

While separability frequently allows for simplified proofs and an effective approximation using a specific countable set of
elements,
for Hilbert spaces the assumption of separability is often only made for convenience,
and the results frequently hold -- with the appropriate changes -- also in the nonseparable setting.
In the present paper we investigate von Neumann's theorem about the domains of unbounded self-adjoint operators:
\begin{theorem}[{\cite[Satz 18]{vNeu29}}]\label{thm:vNeu-dom}
Let $H$ be a \emph{separable} Hilbert space and $A$ an unbounded self-adjoint operator in $H$.
Then there exists a unitary operator $U$ such that $D(U^*AU)\cap D(A) = \{0\}$.
\end{theorem}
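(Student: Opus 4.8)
The plan is to recast the statement in the language of operator ranges, reduce it via a spectral decomposition to a concrete function-space model, and in that model produce $U$ as a suitably \emph{singular} transformation that is incompatible with the regularity encoded in $D(A)$. Since $A$ is self-adjoint, $R:=(I+A^2)^{-1/2}$ is a bounded positive injective operator with $\rg R=D\bigl((I+A^2)^{1/2}\bigr)=D(A)$, and as $A$ is unbounded $R$ is not bounded below, so $0\in\sigma(R)$ and $0$ is not an eigenvalue. For unitary $U$ one has $D(U^*AU)=U^*D(A)=U^{-1}\rg R$, so the claim is equivalent to finding a unitary $U$ with $U\,\rg R\cap\rg R=\{0\}$; hence it suffices to prove that every dense operator range $\cR\neq H$ in a separable Hilbert space admits a unitary $U$ with $U\cR\cap\cR=\{0\}$.

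Next I would reduce to a model. As $0$ is a non-isolated point of $\sigma(R)$ one can choose $1=a_0>a_1>a_2>\dots\downarrow 0$, decreasing as rapidly as desired, with $P_n:=E_R((a_{n+1},a_n])\neq 0$ and $\sum_n P_n=I$. Writing $H=\bigoplus_n\rg P_n$ and using $a_{n+1}\norm{y}\le\norm{Ry}\le a_n\norm{y}$ on $\rg P_n$ one gets $\cR\subseteq\cS:=\bigl\{y:\sum_n a_n^{-2}\norm{P_ny}^2<\infty\bigr\}=\rg\bigl(\bigoplus_n a_n\,\mathrm{id}_{\rg P_n}\bigr)$, so it is enough to move $\cS$. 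Identifying $H$ with $\ell^2$ adapted to $\bigoplus_n\rg P_n$ and then with the Hardy space $H^2(\mathbb{D})$, $\cS$ becomes $\{g:\sum_k w_k^2\abs{\hat g(k)}^2<\infty\}$ with weights $w_k$ built from the numbers $a_n^{-1}$; when these weights grow geometrically --- which, by the freedom in the $a_n$, can be arranged precisely when $\sigma(R)$ accumulates at $0$ slowly enough --- every element of $\cS$ extends holomorphically to some fixed disc $\{\abs z<\rho\}$ with $\rho>1$.

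For such an $\cS$ I would take $U$ to be the unitary of $H^2(\mathbb{D})$ implemented by a disc automorphism, $(Ug)(z)=\frac{(1-\abs a^2)^{1/2}}{1-\bar a z}\,g\bigl(\tfrac{z-a}{1-\bar a z}\bigr)$ with $1/\rho<\abs a<1$. If $v\in U\cS\cap\cS$, then $v$ extends holomorphically to $\{\abs z<\rho\}$, whereas $v=Ug$ with $g\in\cS$ forces a pole or essential singularity of $v$ at the point $1/\bar a$, which lies in $\{\abs z<\rho\}$ because $\abs a>1/\rho$, unless $g=0$; hence $v=0$, so $U\cS\cap\cS=\{0\}$ and therefore $U\cR\cap\cR=\{0\}$. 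For operator ranges whose holomorphic reserve is exhausted --- e.g.\ $D(A)$ for $A$ with polynomially growing spectrum, continuous spectrum, or of relatively bounded form --- I would instead realise the range as a weighted space $\{f\in L^2(\mathbb{T}):\sum_n w_n^2\abs{\hat f(n)}^2<\infty\}$ and let $U$ be multiplication by a unimodular function $u=e^{i\psi}$ whose phase $\psi$ is nowhere locally in the associated regularity class: an element of $u\cR$ then inherits the irregularity of $u$ on every arc, which (for Sobolev-type classes, using that they are local algebras stable under division by non-vanishing elements) is incompatible with membership in $\cR$, giving $u\cR\cap\cR=\{0\}$.

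The main obstacle is constructing $U$ in full generality. The holomorphic-extension mechanism only reaches ``thin'' operator ranges, and the multiplication mechanism needs the range to retain enough regularity for the algebra/division step; the genuinely awkward case is that of ``thick'' operator ranges --- barely proper dense ranges, such as $D(A)$ for $A$ with very slowly growing spectrum --- which fall between these and require a more careful choice of unitary together with a precise uncertainty-principle-type dichotomy separating ``regular'' from ``singular'' vectors. Supplying exactly this is the purpose of the characterisation of operator ranges admitting such a $U$ (the paper's main result, which I would invoke here), and the technical heart of the argument is choosing the realisation, controlling block multiplicities, and establishing that dichotomy rigorously.
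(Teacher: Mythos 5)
Your reduction is sound as far as it goes: passing to $R=(I+A^2)^{-1/2}$, noting $\rg R=D(A)$ is a dense nonclosed operator range, and chopping $\sigma(R)$ into spectral blocks $\rg P_n$ with weights $a_n^{-1}$ is exactly the representation of Lemma~\ref{lem:opran-seq}. The gap is in the construction of $U$. Your Hardy-space mechanism needs the weights $w_k$, enumerated along an orthonormal basis of $H$, to grow geometrically, and this is \emph{not} something you can arrange ``by the freedom in the $a_n$'': the weight $a_n^{-1}$ is repeated $\dim\rg P_n$ times, and nothing prevents a block from being infinite-dimensional. The standard example is $A=I\oplus T$ on $\ell^2\oplus\ell^2$ with $T$ unbounded (the separable analogue of Example~\ref{ex:ce-vNeu-dom}): here $2^{-1/2}$ is an eigenvalue of $R$ of infinite multiplicity, so one weight value recurs infinitely often, $w_k\not\to\infty$ along the basis, elements of $\cS$ have no holomorphic extension past the unit circle, and the disc-automorphism argument collapses. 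Your fallback (multiplication by a rough unimodular $u$) presupposes that $\cS$ is a local algebra stable under division, which again fails for such weights. You acknowledge this and propose to ``invoke'' the characterisation of which ranges admit such a $U$ --- but that characterisation \emph{is} the theorem in the separable case, so the proposal defers precisely the case that carries the difficulty. (Even in the thin case your singularity argument is not airtight: $g\circ\varphi_a$ is undefined on a neighbourhood of $1/\bar a$ when $g$ only lives on $\abs{z}<\rho$, so the claimed pole of $v$ at $1/\bar a$ needs a genuine pseudocontinuation argument rather than inspection of the formula.)

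The missing idea is a redistribution of infinite-dimensional blocks rather than a change of unitary. Since $\cR$ is nonclosed, infinitely many $\rg P_n$ are nonzero, so for every $n$ the tail $\bigoplus_{k>n}\rg P_k$ is infinite-dimensional; one can therefore split each infinite-dimensional block into countably many pieces and reshuffle them down the sequence (this is Lemma~\ref{lem:enlarge} and Lemma~\ref{lem:or}\,\ref{en:lor-contain}) so that, after enlarging $\cS$, every block has dimension $\dim H$ and $\cS$ becomes unitarily equivalent to the range of a direct sum of copies of the compact diagonal model of Example~\ref{ex:T-nc-range}. For that model a disjoining unitary exists --- either by an explicit rotation construction or by the Baire-category argument of Proposition~\ref{prop:israel} --- and conjugating back finishes the proof, as in Proposition~\ref{prop:char}. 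Without this enlarging step your argument proves the theorem only for operators $A$ all of whose spectral multiplicities are finite and suitably controlled, which is strictly weaker than the statement.
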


We show in Example~\ref{ex:ce-vNeu-dom} that a naive reformulation for the nonseparable case is false, and provide
an appropriate generalisation that works for general Hilbert spaces in Theorem~\ref{thm:vNeu-nonsep}.
For our arguments we employ Dixmier's approach on von Neumann's theorem as presented in~\cite[Section~3]{FW71:op-rg}.

Related questions in the much more diverse Banach space setting, still with the separability assumption,
however, have been treated in~\cite{CS98:disjoint}.
Very recently, sharper versions of von Neumann's theorem related to
Schm\"udgen's theorem~\cite[Theorem~5.1]{Schm83} and involving the domains of fractional powers were
presented in~\cite{AZ2015}, again in the separable case.
In~\cite{Kos2006} concrete examples of operators were given with certain
intersection properties of the fractional domains.

Since separable Hilbert spaces are frequently the most important case for applications, often only the separable case is
considered, which sometimes helps to simplify the exposition.
Of course there are also problems that are much easier in the nonseparable case, an example being the famous invariant subspace problem.

There are numerous instances, where some result was only substantially later extended to the nonseparable case.
In these cases usually a suitable reformulation of the problem was required.
Examples include the extension of the spectral theorem~\cite{Rel34,Loew34},
the characterisation of closed two-sided ideals~\cite{Luft68},
the description of the distance of an operator to the set of unitary operators~\cite{tElst90:unit},
Gleason's theorem~\cite{EH75,Sol2009:preprint} and~\cite[Section~3.5, for example]{Dvu93},
the block diagonalisation of general operators~\cite{Mik2009}
or a generalisation of semi-Fredholm operators~\cite{Bou95}.

Naturally there are plenty of cases, where results have been only established in the separable case,
for example the unitarily invariant classification of operator ranges in~\cite{LT76} and~\cite{Dix49:var-J}. 
In~\cite{Dix49:var-J} the author writes about the general nonseparable case:
\begin{quote}
Les cas g\'en\'eral peut aussi se traiter, mais conduit \`a des classifications plus 
p\'enibles, les questions de dimension jouant souvent un r\^ole essentiel.
\end{quote}
But there are also open problems specifically for the nonseparable case, see for example~\cite{FMcKS2013}.

A short outline of this paper is as follows. In Section~\ref{sec:ce} we present a counterexample showing that
von Neumann's theorem does not directly extend to nonseparable Hilbert spaces.
Then we gather required prerequisites about operator ranges in
Section~\ref{sec:or}.
Our reformulation of von Neumann's theorem for general Hilbert spaces, which is the main result of this paper, is proved in Section~\ref{sec:vNeu-gen}.
In the final Section~\ref{sec:stab} we apply our reformulation to obtain stability and density properties, closing with a curious
counterexample in Example~\ref{ex:ce-dens}.

We assume that the reader is familiar with several basic facts from set theory and the arithmetic of cardinal numbers.
For the required background we refer to~\cite[Section I.3, in particular~(3.14)]{Jech2003}.
In this paper $\dim H$ refers to the Hilbert space dimension of a Hilbert space $H$,
i.e.\ to the cardinality of one/every orthonormal basis of $H$.
Given a Hilbert space $H$, the set of unitary operators on $H$ will be denoted by $\cU$.
Throughout we will work in the theory ZFC.
Moreover, the Hilbert spaces considered here are assumed to be complex.

\section{A counterexample}\label{sec:ce}

In this section we provide a counterexample to von Neumann's theorem in a nonseparable Hilbert space.
We need the following lemma that allows to compare the dimension of two Hilbert spaces.
\begin{lemma}\label{lem:cmp-dim}
Let $H$ and $K$ be Hilbert spaces and $j\in\Linop(H,K)$.
Then $\dim\clos{\rg j}\le\dim H$.
If $j$ is in addition injective, then $\dim H=\dim\clos{\rg j}$.
\end{lemma}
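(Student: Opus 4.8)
The plan is to establish the inequality $\dim\clos{\rg j}\le\dim H$ first, and then to obtain the equality in the injective case by feeding the adjoint into that inequality.

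For the inequality, the finite-dimensional case $\dim H<\aleph_0$ is elementary: then $\rg j=j(H)$ is spanned by finitely many vectors, hence finite-dimensional and closed, so $\dim\clos{\rg j}=\dim\rg j\le\dim H$. Thus I would assume $\dim H$ infinite, fix an orthonormal basis $(e_i)_{i\in I}$ of $H$ with $\card I=\dim H$, and observe that $j(\linspan\{e_i:i\in I\})=\linspan\{je_i:i\in I\}$ is dense in $\rg j$, hence in $\clos{\rg j}$, by continuity of $j$. Passing to finite linear combinations of the $je_i$ with coefficients in $\mathbb{Q}+i\mathbb{Q}$ produces a dense subset $D\subseteq\clos{\rg j}$ with $\card D\le\card I$ (the finitely many coefficients and finite index subsets do not raise the infinite cardinal $\card I$). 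Finally, a Hilbert space with a dense subset $D$ has Hilbert dimension at most $\card D$: for an orthonormal basis $(f_\alpha)$ choose $d_\alpha\in D$ with $\norm{d_\alpha-f_\alpha}<\tfrac{1}{\sqrt2}$; since $\norm{f_\alpha-f_\beta}=\sqrt2$ for $\alpha\ne\beta$, the assignment $\alpha\mapsto d_\alpha$ is injective. Hence $\dim\clos{\rg j}\le\card D\le\dim H$.

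For the equality, assume $j$ injective; by the first part it suffices to show $\dim H\le\dim\clos{\rg j}$. I would use $\ker j^*=(\rg j)^\perp$, so that $K=\clos{\rg j}\oplus\ker j^*$ and the restriction $\restrict{j^*}{\clos{\rg j}}\in\Linop(\clos{\rg j},H)$ has range $\rg j^*$; moreover $\clos{\rg j^*}=(\ker j)^\perp=H$ because $j$ is injective. Applying the inequality just proved to $\restrict{j^*}{\clos{\rg j}}$ gives $\dim H=\dim\clos{\rg(\restrict{j^*}{\clos{\rg j}})}\le\dim\clos{\rg j}$, and together with the first part we obtain equality. (Alternatively, the polar decomposition $j=V\abs{j}$ of an injective $j$ yields a partial isometry $V$ whose initial space is $H$ and whose final space is $\clos{\rg j}$, i.e.\ a unitary from $H$ onto $\clos{\rg j}$, which gives the equality at once.)

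I expect the only real friction to be the cardinal-arithmetic bookkeeping in the first part: isolating the finite-dimensional case, verifying that rational coefficients and finite index subsets keep the cardinality at $\dim H$, and the density-character-bounds-dimension step. None of this is deep, but it is precisely where the nonseparable setting must be handled with care rather than waved away.
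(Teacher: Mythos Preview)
Your argument is correct. The route differs from the paper's in both parts. For the inequality $\dim\clos{\rg j}\le\dim H$ the paper simply invokes \cite[Lemma~2.4]{Luft68}, whereas you give a self-contained density-character argument (finite case separately, then rational span and the $\sqrt{2}$-separation trick); your version is more elementary and avoids the external reference at the cost of a little bookkeeping. For the injective case the paper goes straight to the polar decomposition $j=UP$ and reads off that $U$ is a partial isometry with initial space $H$ and final space $\clos{\rg j}$---exactly the alternative you mention in parentheses. Your primary route, feeding $\restrict{j^*}{\clos{\rg j}}$ back into the first inequality, is a nice symmetry argument that avoids invoking polar decomposition altogether; it buys you a proof that uses nothing beyond the basic $\ker/\rg$ dualities, at the price of being slightly less direct than the one-line polar-decomposition step.
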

\begin{proof}
The first part is shown in~\cite[Lemma~2.4]{Luft68}.

Now suppose that $j$ is injective. Consider the polar decomposition of $j$. So $j=UP$,
where $P$ is a positive semi-definite operator on $H$ and $U$ is a partial isometry with initial space $H$ and
final space $\clos{\rg j}$. The latter implies that $\dim H=\dim\clos{\rg j}$.
\end{proof}
Note that in Lemma~\ref{lem:cmp-dim} the continuity of $j$ is essential, of course, as $\ell^2(\NN)$ and $\ell^2(\RR)$ have the same vector space dimension~\cite{Lacey73}.

The following example shows that Theorem~\ref{thm:vNeu-dom} does not directly extend to
the nonseparable case.
\begin{example}\label{ex:ce-vNeu-dom}
Let $H\coloneqq H_1\oplus\ell_2(\NN)$, where $H_1$ is a nonseparable Hilbert space. Then
\begin{equation}\label{eq:ex-dim-ineq}
    \dim H_1>\dim\ell_2(\NN)=\aleph_0.
\end{equation}
Let $T$ be the unbounded multiplication operator in $\ell_2(\NN)$ given by 
\[
    D(T)=\Bigl\{a\in\ell_2(\NN) : \sum_{n=1}^\infty 4^n\abs{a_n}^2 < \infty\Bigr\}
\]
and $Te_n = 2^ne_n$ for all $n\in\NN$.
Define the operator $A$ in $H$ by $A=I\oplus T$, where $I$ is the identity operator on $H_1$.
Then $A$ is an unbounded self-adjoint operator.

Let $U$ be a unitary operator on $H$. By~\eqref{eq:ex-dim-ineq} and Lemma~\ref{lem:cmp-dim}
the operator $P_2 \restrict{U}{H_1}\colon H_1\to\ell_2(\NN)$ is not injective,
where $P_2$ is the projection onto the second component in $H=H_1\oplus\ell_2(\NN)$.
So there exist $x\in H_1\setminus\{0\}$ and $y\in H_1$ such that $U(x,0) =(y,0)$.
Hence $D(A)\cap D(U^*AU)\ne \{0\}$.
In particular, there does not exist an operator $B$ that is unitarily equivalent to $A$ and satisfies $D(A)\cap D(B)=\{0\}$.
\end{example}

\section{Operator ranges}\label{sec:or}

In this section we make use of Dixmier's technique~\cite{Dix49:etude-julia} as presented in \cite{FW71:op-rg}.
We recall basic properties of \emph{operator ranges}
and consider an equivalent reformulation of von Neumann's theorem in terms of operator ranges.
Moreover, we compare the operator range of a bounded operator with that of its adjoint.
\begin{definition}
Let $H$ be a Hilbert space. The vector subspaces that are the range of a bounded
operator on $H$ are called \emphdef{operator ranges} in $H$.
\end{definition}

As the following example shows, an operator range obviously does not need to be closed.
We shall see that the operator range in the example is in some sense canonical. 
\begin{example}\label{ex:T-nc-range}
Let $H=\ell_2(\NN)$. Define $T\in\Linop(H)$ by $Te_n=2^{-n} e_n$ for all $n\in\NN$, where $(e_n)_{n\in\NN}$ is the usual orthonormal basis of $\ell^2(\NN)$.
Then $\rg T$ is dense in $H$, but not closed as  $(1,\tfrac{1}{2},\tfrac{1}{4},\tfrac{1}{8},\ldots)$ is not contained in $\rg T$.
\end{example}

A straightforward reformulation of von Neumann's theorem in terms of operator ranges is as follows.
We point out that the proof for this reformulation in~\cite[Theorem~3.6]{FW71:op-rg} based on Dixmier's technique is
completely different and considerably less involved than von~Neumann's original proof~\cite[Satz~18]{vNeu29}.
\begin{theorem}\label{thm:vNeu-op-ran}
Let $H$ be a separable Hilbert space. If $\cR$ is a nonclosed operator range in $H$, then there exists
a unitary operator $U$ such that $U\cR\cap \cR=\{0\}$.
\end{theorem}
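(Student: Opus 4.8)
The plan is to follow Dixmier's argument as streamlined in \cite[Section~3]{FW71:op-rg}, after two reductions. \emph{First, reduction to the dense case.} A nonclosed subspace is infinite-dimensional, so $M\coloneqq\clos{\cR}$ is a closed infinite-dimensional subspace of $H$, hence of dimension $\aleph_0$ by separability. Suppose the assertion is already known when $\cR$ is dense. If $M=H$ there is nothing to do; otherwise write $\cR=\rg((TT^*)^{1/2})$ with $T\in\Linop(H)$. Since $TT^*$ vanishes on $M^\perp$, it has the form $R\oplus0$ with respect to $H=M\oplus M^\perp$ for some positive $R\in\Linop(M)$, so $\cR=\rg(R^{1/2})$ is an operator range in $M$; it is dense in $M$ and nonclosed, since closedness in $M$ and in $H$ coincide. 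Applying the dense case in $M$ produces a unitary $U_0$ on $M$ with $U_0\cR\cap\cR=\{0\}$, and then $U\coloneqq U_0\oplus I$ on $H=M\oplus M^\perp$ satisfies $U\cR\cap\cR=U_0\cR\cap\cR=\{0\}$. So from now on $\cR$ is dense in $H$ and $H\cong\ell_2(\NN)$.

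\emph{Reduction to a block-diagonal operator range.} Write $\cR=\rg(T)$ with $T=T^*\ge0$, $\norm T\le1$, and $T$ injective (replace $T$ by $(TT^*)^{1/2}$; dense range forces $\ker T^*=\{0\}$). As $\cR$ is not closed, $T$ is not bounded below, so $0\in\sigma(T)\setminus\sigma_p(T)$. Fix reals $1=a_0>a_1>a_2>\cdots$ with $a_n\to0$, let $P_n$ be the spectral projection of $T$ for the interval $[a_{n+1},a_n)$ when $n\ge1$ (and $P_0$ for $[a_1,1]$), and put $H_n\coloneqq P_nH$, so $H=\bigoplus_{n\ge0}H_n$ (there is no mass at $0$, as $T$ is injective) and infinitely many $H_n$ are nonzero, since $0\in\sigma(T)$. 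Since $T^2\le\bigl(\bigoplus_na_nP_n\bigr)^2$, the range-inclusion criterion ($AA^*\le BB^*$ implies $\rg A\subseteq\rg B$) gives $\cR\subseteq\cR_1\coloneqq\rg\bigl(\bigoplus_na_nP_n\bigr)$. As $U\cR\cap\cR\subseteq U\cR_1\cap\cR_1$ for every unitary $U$, it suffices to find $U\in\cU$ with $U\cR_1\cap\cR_1=\{0\}$, and $\cR_1=\{\sum_nx_n:x_n\in H_n,\ \sum_na_n^{-2}\norm{x_n}^2<\infty\}$ now has a transparent weighted-$\ell_2$ structure.

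\emph{Constructing the unitary.} It remains to produce $U$ with $U\cR_1\cap\cR_1=\{0\}$, equivalently so that $Sx=USy$ forces $Sx=0$, where $S\coloneqq\bigoplus_na_nP_n$. Following \cite[Section~3]{FW71:op-rg}, the natural framework is Dixmier's graph picture \cite{Dix49:etude-julia}: in $H\oplus H$ the closed subspace $N\coloneqq\{(x,Sx):x\in H\}$ is a quasi-complement of $M_0\coloneqq H\oplus\{0\}$ -- indeed $N\cap M_0=\{0\}$ as $S$ is injective, and $N+M_0=H\oplus\cR_1$ is dense as $\cR_1$ is dense -- and $\cR_1$ is the projection of $N$ onto the second coordinate. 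For a unitary $U$ on the second copy of $H$ the subspace $(I\oplus U)N=\{(x,USx):x\in H\}$ is again a quasi-complement of $M_0$, with second-coordinate projection $U\cR_1$; so the task becomes to choose $U$ making the second-coordinate projections of $N$ and $(I\oplus U)N$ disjoint, after which one exhibits such a $U$ explicitly -- using the freedom coming from $\dim H=\aleph_0$ -- and verifies $U\cR_1\cap\cR_1=\{0\}$ by a direct computation.

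\emph{The main obstacle.} This last step is where the real work lies. Any unitary $U$ that respects some orthogonal decomposition of $H$ along which $S$ acts diagonally -- for instance a permutation of a compatible orthonormal basis, a block rotation, or a shift of the blocks -- transforms $\cR_1$ into a weighted $\ell_2$-space relative to a compatible decomposition, and the intersection of two weighted $\ell_2$-spaces taken relative to a common decomposition is again a nonzero weighted $\ell_2$-space; no such ``local'' $U$ can therefore work. Hence $U$ must be genuinely spread out, and constructing it so that $U\cR_1\cap\cR_1=\{0\}$ -- especially when some $H_n$ are infinite-dimensional, so that $\cR_1$ contains infinite-dimensional closed subspaces that $U$ must nevertheless displace off $\cR_1$ -- is the technical core of the argument. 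Separability is used only to keep the relevant combinatorial data countable; Example~\ref{ex:ce-vNeu-dom} shows that without it the statement genuinely fails.
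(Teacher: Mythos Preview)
Your reductions are sound: passing to $\clos{\cR}$ to assume density, replacing $T$ by the block-diagonal majorant $S=\bigoplus_n a_n P_n$ so that $\cR\subset\cR_1=\rg S$, and noting that it suffices to treat $\cR_1$---all of this is correct and matches the setup behind Lemma~\ref{lem:opran-seq}.

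The gap is that you never construct $U$. Your third paragraph sets up Dixmier's graph picture, but observing that $N=\{(x,Sx)\}$ is a quasi-complement of $H\oplus\{0\}$ is merely a restatement of the problem; it does not move you toward a unitary with $U\cR_1\cap\cR_1=\{0\}$. You then write that ``one exhibits such a $U$ explicitly'' and that doing so ``is the technical core of the argument''---and stop. Your final paragraph correctly diagnoses why block-diagonal or permutation unitaries fail, but a diagnosis is not a construction. As written, the proposal contains all of the reductions and none of the proof.

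For orientation, there are two standard ways to close this gap. The explicit route in \cite[Theorem~3.6]{FW71:op-rg} builds $U$ directly from the representing sequence $(\cH_n)$: one chooses an orthonormal basis adapted to the blocks and defines $U$ by a concrete ``spreading'' rearrangement of basis vectors, then verifies $U\cR_1\cap\cR_1=\{0\}$ by a weighted-$\ell_2$ computation. The route used later in this paper is softer: Proposition~\ref{prop:israel} handles the compact diagonal operator of Example~\ref{ex:T-nc-range} by a Baire category argument in $\cU$, and the end of the proof of Proposition~\ref{prop:char} (via Lemma~\ref{lem:enlarge}) reduces the general case to a direct sum of copies of that operator. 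Either approach would complete your argument; neither is present.
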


While the naive extension of Theorem~\ref{thm:vNeu-op-ran} to the nonseparable case is false, 
one can actually give useful characterisations of the operator ranges $\cR$ for which such a unitary operator $U$ exists.
To this end we need a better understanding of operator ranges.

\begin{proposition}\label{prop:opran}
Let $H$ be a Hilbert space.
\begin{alenum}
\item\label{en:or-cl} 
Every closed subspace of $H$ is an operator range in $H$.
\item\label{en:or-psa} 
Every operator range in $H$ is the range of a positive operator in $\Linop(H)$.
\item\label{en:or-hi} 
A vector subspace $\cR$ of $H$ is an operator range in $H$ if and only if $\cR$ can be 
equipped with a complete inner product such that it is continuously embedded into $H$.
\item\label{en:or-sum} 
The vector sum of two operator ranges is an operator range.
In fact, if $T,S\in\Linop(H)$,
then $\rg T+\rg S = \rg(TT^* + SS^*)^{1/2}$.
\item\label{en:or-is}
The intersection of two operator ranges is an operator range. 
\item\label{en:or-sum-all}
If $\cR$ and $\cS$ are operator ranges in $H$ such that $\cR+\cS=H$, then
there exist closed subspaces $M_1,M_2$ of $H$ with $M_1\subset\cR$, $M_2\subset\cS$, $M_1\cap M_2=\{0\}$ and $M_1+M_2=H$.
\item\label{en:or-sum-clos}
If $\cR$ and $\cS$ are operator ranges in $H$ such that $\cR\cap\cS=\{0\}$ and $\cR+\cS$ is closed,
then both $\cR$ and $\cS$ are closed.
\end{alenum}
\end{proposition}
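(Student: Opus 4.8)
The plan is as follows. Parts~\ref{en:or-cl}--\ref{en:or-is} form the standard toolbox for operator ranges and I would dispatch them quickly, using throughout the elementary identity $\rg A = \rg(AA^*)^{1/2}$, valid for every bounded operator $A$ between Hilbert spaces; it follows from the polar decomposition $A^* = V(AA^*)^{1/2}$, which gives both $A = (AA^*)^{1/2}V^*$ (so $\rg A\subseteq\rg(AA^*)^{1/2}$) and $AV = (AA^*)^{1/2}$ (so $\rg(AA^*)^{1/2}\subseteq\rg A$). For~\ref{en:or-cl} I take the orthogonal projection onto the closed subspace. For~\ref{en:or-psa} I apply the identity to $A=T$ with $\cR=\rg T$. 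For~\ref{en:or-hi}: if $\cR = \rg T$, I transport the inner product of $(\ker T)^\perp$ to $\cR$ along the bijection $\restrict{T}{(\ker T)^\perp}$, obtaining a complete inner product on $\cR$ with $\norm{\cdot}_H\le\norm{T}\norm{\cdot}_\cR$; conversely, if $\cR$ carries a complete inner product with bounded inclusion $\iota\colon\cR\to H$, then $\cR = \rg\iota = \rg(\iota\iota^*)^{1/2}$ and $(\iota\iota^*)^{1/2}\in\Linop(H)$. For~\ref{en:or-sum} I note that $R\colon H\oplus H\to H$, $R(x,y) = Tx+Sy$, has $\rg R = \rg T+\rg S$ and $RR^* = TT^*+SS^*$, so $\rg T+\rg S = \rg(RR^*)^{1/2}$. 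For~\ref{en:or-is} I endow $\cR\cap\cS$ with the sum of the two inner products furnished by~\ref{en:or-hi}; a Cauchy sequence there is Cauchy in each of $\cR$ and $\cS$, the two limits coincide in $H$ by continuity of the inclusions, and~\ref{en:or-hi} applies.

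The substance is part~\ref{en:or-sum-all}, which I would prove as follows. Write $\cR = \rg T$ and $\cS = \rg S$ and set $P\coloneqq TT^*$, $Q\coloneqq SS^*$ and $G\coloneqq P+Q$, so that $\rg P\subseteq\cR$ and $\rg Q\subseteq\cS$. By~\ref{en:or-sum} we have $\rg G^{1/2} = \cR+\cS = H$, and as $G\ge 0$ this forces $G$ to be invertible. Now consider $\mathcal{E}\coloneqq G^{-1/2}PG^{-1/2}$, which is self-adjoint with $0\le\mathcal{E}\le I$ (because $P\le G$) and satisfies $I-\mathcal{E} = G^{-1/2}QG^{-1/2}$. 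Let $\Pi\coloneqq\mathbf{1}_{(1/2,1]}(\mathcal{E})$ be the corresponding spectral projection, so that $I-\Pi = \mathbf{1}_{[0,1/2]}(\mathcal{E})$. On $\rg\Pi$ the operator $\mathcal{E}$ is bounded below, hence $\rg\Pi\subseteq\rg\mathcal{E} = G^{-1/2}(\rg P)$, and likewise $\rg(I-\Pi)\subseteq G^{-1/2}(\rg Q)$. I then put
\[
    M_1\coloneqq G^{1/2}(\rg\Pi)\qquad\text{and}\qquad M_2\coloneqq G^{1/2}\bigl(\rg(I-\Pi)\bigr).
\]
Since $G^{1/2}$ is a linear homeomorphism of $H$, both $M_1$ and $M_2$ are closed; by construction $M_1\subseteq\rg P\subseteq\cR$ and $M_2\subseteq\rg Q\subseteq\cS$; and since $\rg\Pi$ and $\rg(I-\Pi)$ are complementary orthogonal subspaces of $H$ while $G^{1/2}$ is bijective, $M_1\cap M_2 = \{0\}$ and $M_1+M_2 = G^{1/2}H = H$, which is what~\ref{en:or-sum-all} asserts.

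Part~\ref{en:or-sum-clos} is then a short deduction from~\ref{en:or-sum-all}. Set $L\coloneqq\cR+\cS$; this is closed, hence a Hilbert space, and by~\ref{en:or-hi} both $\cR$ and $\cS$ are operator ranges in $L$ with $\cR+\cS = L$. Applying~\ref{en:or-sum-all} inside $L$ gives closed subspaces $M_1\subseteq\cR$ and $M_2\subseteq\cS$ with $M_1\cap M_2 = \{0\}$ and $M_1+M_2 = L$. If $r\in\cR$ and $r = m_1+m_2$ with $m_i\in M_i$, then $m_2 = r-m_1\in\cR\cap\cS = \{0\}$, so $r = m_1\in M_1$; hence $\cR = M_1$ and, by symmetry, $\cS = M_2$. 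Being closed in the closed subspace $L$, both $\cR$ and $\cS$ are closed in $H$.

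I expect part~\ref{en:or-sum-all} to be the main obstacle: the remaining parts are essentially bookkeeping, whereas~\ref{en:or-sum-all} hinges on the right normalisation. One first shrinks $\cR$ and $\cS$ to the operator ranges $\rg P$ and $\rg Q$ so that $G = P+Q$ becomes invertible, and then conjugates by $G^{1/2}$, which replaces the two (a priori non-commuting) operator ranges by a single self-adjoint contraction $\mathcal{E}$; a spectral cut of $\mathcal{E}$ at $\tfrac12$ then delivers, in one stroke, the inclusions $M_1\subseteq\cR$ and $M_2\subseteq\cS$, the closedness of the $M_i$, and the decomposition $M_1+M_2 = H$ with $M_1\cap M_2=\{0\}$. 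Once~\ref{en:or-sum-all} is in place, \ref{en:or-sum-clos}~follows in two lines as above.
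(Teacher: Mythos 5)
Your proposal is correct and, in substance, follows the same route as the paper: the paper's proof of this proposition consists almost entirely of citations to Fillmore--Williams, and the arguments you supply (the identity $\rg A=\rg(AA^*)^{1/2}$ via polar decomposition, the operator $(x,y)\mapsto Tx+Sy$ on $H\oplus H$ for sums, the sum of the two inner products for intersections, and the spectral cut of $G^{-1/2}TT^*G^{-1/2}$ at $\tfrac{1}{2}$ after normalising by the invertible operator $G^{1/2}$) are precisely the ones in that reference, all checked correctly. The only mild deviation is that you deduce the final item from the penultimate one by passing to the closed subspace $\cR+\cS$ and identifying $\cR=M_1$, $\cS=M_2$ there, whereas the paper points to the proof of a separate theorem in Fillmore--Williams; your derivation is sound and arguably cleaner.
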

\begin{proof}
Statement~\ref{en:or-cl} follows by using the corresponding orthogonal projection in $H$.
To prove \ref{en:or-psa}, it suffices to note that $\rg T=\rg (TT^*)^{1/2}$, which follows from Douglas'
lemma~\cite[Theorem~2.1]{FW71:op-rg}.
Statement~\ref{en:or-hi} is part of~\cite[Theorem~1.1]{FW71:op-rg}, \ref{en:or-sum} is given
in~\cite[Theorem~2.2]{FW71:op-rg} and~\ref{en:or-is} is a consequence of~\cite[Corollary~2 after
Theorem~2.2]{FW71:op-rg}.
Finally,~\ref{en:or-sum-all} can be found in~\cite[Theorem~2.4]{FW71:op-rg} 
and~\ref{en:or-sum-clos} follows from the proof of~\cite[Theorem~2.3]{FW71:op-rg}.
\end{proof}

Hence the operator ranges in $H$ are a lattice with respect to intersection and sum.
Note that the sum of two closed subspaces is not closed in general.
Moreover, not every vector subspace is an operator range.
\begin{example}
The following are examples of vector subspaces that are not operator ranges.
\forceparpenalty
\begin{enumerate}
\item The kernel of an unbounded linear functional $\varphi\colon H\to \CC$ is a dense, nonclosed vector subspace of $H$
with codimension $1$, but not an operator range in $H$. In fact, there exists an $x\in H\setminus\{0\}$ such that
$H=\ker\varphi+\linspan\{x\}$ and $\ker\varphi\cap\linspan\{x\}=\{0\}$. So $\ker\varphi$ cannot be an operator range in $H$ by Proposition~\ref{prop:opran}\,\ref{en:or-sum-clos}.
\item
The space $L^p(0,1)$ with $p>2$ is a subspace of $L^2(0,1)$, but not an operator range in $L^2(0,1)$; see~\cite[last paragraph on p.\,257]{FW71:op-rg}.
\item
If $A$ is a maximal accretive operator in $H$ that is not \m-accretive, then $\rg(I+A)$ is not an operator range in $H$,
see~\cite[Theorem~5.4 and Proposition~5.12]{tESV:arxiv}.
\end{enumerate}
\end{example}

The description of operator ranges in the next lemma will be essential for this paper.
\begin{lemma}[{\cite[Theorem~1.1\,(5)]{FW71:op-rg}}]\label{lem:opran-seq}
A vector subspace $\cR$ of $H$ is an operator range in $H$ if and only if
there exists a sequence of closed pairwise orthogonal subspaces $(\cH_n)$ such that
\[
    \cR = \Biggl\{\sum_{n=1}^\infty x_n : \text{$x_n\in\cH_n$ for all $n\in\NN$ and $\sum_{n=1}^\infty 4^n\norm{x_n}^2 <\infty$}\Biggr\}.
\]
\end{lemma}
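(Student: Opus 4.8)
The plan is to prove both implications from scratch, using an explicit diagonal-type operator for the ``if'' part and the spectral theorem for the ``only if'' part. The ``if'' direction is the routine one; the ``only if'' direction is where the real content lies.

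For the ``if'' direction, suppose $\cR$ has the stated form for some sequence $(\cH_n)$ of pairwise orthogonal closed subspaces of $H$, and let $P_n$ be the orthogonal projection of $H$ onto $\cH_n$. I would set $T \coloneqq \sum_{n=1}^\infty 2^{-n} P_n$. Because the ranges of the $P_n$ are pairwise orthogonal, the estimate $\norm{\sum_{n=M+1}^N 2^{-n} P_n x}^2 = \sum_{n=M+1}^N 4^{-n}\norm{P_n x}^2 \le 4^{-M}\norm{x}^2$ shows that the series converges in operator norm, so $T$ is a bounded (in fact positive) operator with $Tx = \sum_{n=1}^\infty 2^{-n} P_n x$ for every $x$. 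It then remains to identify $\rg T$ with $\cR$, which is a short computation: for $x\in H$ the vectors $x_n \coloneqq 2^{-n} P_n x \in \cH_n$ satisfy $\sum_n 4^n \norm{x_n}^2 = \sum_n \norm{P_n x}^2 \le \norm{x}^2$, so $Tx = \sum_n x_n \in \cR$; conversely, if $y = \sum_n y_n \in \cR$ with $y_n \in \cH_n$ and $\sum_n 4^n \norm{y_n}^2 < \infty$, then $x \coloneqq \sum_n 2^n y_n$ converges in $H$ by orthogonality, one has $P_n x = 2^n y_n$, and hence $Tx = y$. So $\cR = \rg T$ is an operator range.

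For the ``only if'' direction, I would write $\cR = \rg S$ with $S \ge 0$ in $\Linop(H)$ using Proposition~\ref{prop:opran}\,\ref{en:or-psa}; since rescaling $S$ by a positive scalar does not change $\rg S$, one may assume $\norm{S} \le \tfrac12$. Let $E$ be the spectral measure of $S$ and set $\cH_n \coloneqq \rg E\bigl((2^{-(n+1)}, 2^{-n}]\bigr)$ for $n \in \NN$. These subspaces are closed, pairwise orthogonal and satisfy $H = \ker S \oplus \bigoplus_{n=1}^\infty \cH_n$; each $\cH_n$ reduces $S$, and $S_n \coloneqq \restrict{S}{\cH_n}$ is a positive invertible operator on $\cH_n$ with $\norm{S_n} \le 2^{-n}$ and $\norm{S_n^{-1}} \le 2^{n+1}$. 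Now for $x\in H$, discarding its $\ker S$-component and writing the rest as $x = \sum_n x_n$ with $x_n \in \cH_n$, the vector $y \coloneqq Sx = \sum_n S_n x_n$ has its $n$-th term in $\cH_n$ and $\sum_n 4^n \norm{S_n x_n}^2 \le \sum_n \norm{x_n}^2 \le \norm{x}^2 < \infty$, so $\rg S$ is contained in the right-hand side. Conversely, given $y = \sum_n y_n$ with $y_n \in \cH_n$ and $\sum_n 4^n\norm{y_n}^2 < \infty$, the vectors $x_n \coloneqq S_n^{-1} y_n \in \cH_n$ satisfy $\sum_n \norm{x_n}^2 \le 4\sum_n 4^n\norm{y_n}^2 < \infty$, so $x \coloneqq \sum_n x_n$ converges in $H$ and $Sx = y$, whence $y \in \rg S$. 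This yields the claimed description of $\cR = \rg S$.

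The interchange of $S$ with the infinite sums (legitimate since $S$ is bounded and the series converge in $H$) and the per-band norm estimates are routine. The point that needs care -- and which I expect to be the main obstacle -- is the two-sided control of $S$ on the spectral bands $\cH_n$: only a geometric (dyadic) partition of the spectrum makes $\norm{S_n}$ and $\norm{S_n^{-1}}$ simultaneously comparable to $2^{-n}$ and $2^{n}$, which is exactly what forces the weight $4^n$ to work in both inclusions. A coarser or non-geometric splitting of $\sigma(S)$ would destroy this balance, and one also has to be mildly attentive to the $\ker S$-summand, which contributes nothing to $\rg S$ and is correctly invisible in the stated formula.
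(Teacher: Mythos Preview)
Your proposal is correct and follows precisely the approach the paper indicates: the paper's proof is the single line ``This follows from Proposition~\ref{prop:opran}\,\ref{en:or-psa} and the spectral theorem,'' and you have simply unpacked that hint into the standard dyadic spectral decomposition (with the diagonal operator $\sum 2^{-n}P_n$ for the converse). There is no meaningful difference in method.
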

\begin{proof}
This follows from Proposition~\ref{prop:opran}\,\ref{en:or-psa} and the spectral theorem.
\end{proof}

Adopt the notation of Lemma~\ref{lem:opran-seq}. We say that the sequence $(\cH_n)$ \emphdef{represents} the operator range $\cR$.
Note that $\cH_n=\{0\}$ for an $n\in\NN$ is allowed. Moreover, the sequence representing $\cR$ is not unique in general. 
For example, replacing $\cH_1$ and $\cH_2$ by $\{0\}$ and $\cH_1\oplus\cH_2$ (or vice versa) does not change the represented operator range $\cR$.

\begin{lemma}\label{lem:or}
Let $H$ be a Hilbert space. Let $\cR$ and $\cS$ be operator ranges in $H$.
Suppose that the sequence $(\cH_n)$ represents $\cR$ and that $(\cK_n)$ represents $\cS$.
\begin{alenum}
\item\label{en:lor-dense} 
One has $\clos{\cR}=\bigoplus_{k=1}^\infty\cH_k$ and $H=\cR^\perp \oplus\bigoplus_{k=1}^\infty\cH_k$. In particular, $\cR$ is dense in $H$ if and only if $H=\bigoplus_{k=1}^\infty\cH_k$.
Moreover, $\cR^\perp\oplus\bigoplus_{k=n+1}^\infty\cH_k=\bigl(\bigoplus_{k=1}^n\cH_k\bigr)^\perp$ for all $n\in\NN$.
\item\label{en:lor-unequi} 
If $\dim\cR^\perp = \dim\cS^\perp$ and $\dim\cH_n=\dim\cK_n$ for all $n\in\NN$, then there exists a unitary operator $U$ such that $U\cR=\cS$.
\item\label{en:lor-clos}
The operator range $\cR$ is closed if and only if there exists an $n_0\in\NN$ such that $\cH_n=\{0\}$ for all $n>n_0$.
\item\label{en:lor-contain} If $\cK_n\subset\bigoplus_{k=1}^n \cH_k$ for all $n\in\NN$, then $\cS\subset\cR$.
\end{alenum}
\end{lemma}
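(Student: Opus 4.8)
The plan is to prove the four parts largely independently, with~\ref{en:lor-dense} serving as the basis for~\ref{en:lor-unequi}. For~\ref{en:lor-dense}, note first that every finite sum $\sum_{k=1}^n x_k$ with $x_k\in\cH_k$ lies in $\cR$, since the condition $\sum 4^k\norm{x_k}^2<\infty$ is vacuous when only finitely many terms are nonzero; hence $\cR$ contains the algebraic direct sum of the $\cH_k$, which is dense in the Hilbert space direct sum $\bigoplus_{k=1}^\infty\cH_k$. Conversely $\cR\subset\bigoplus_{k=1}^\infty\cH_k$ because $\sum\norm{x_k}^2\le\sum 4^k\norm{x_k}^2<\infty$. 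As $\bigoplus_{k=1}^\infty\cH_k$ is closed, passing to closures gives $\clos{\cR}=\bigoplus_{k=1}^\infty\cH_k$, and then $H=\clos{\cR}\oplus\clos{\cR}^\perp=\bigl(\bigoplus_{k=1}^\infty\cH_k\bigr)\oplus\cR^\perp$ together with $\cR^\perp=\clos{\cR}^\perp$ yields the asserted decomposition and the density criterion; splitting the first $n$ summands off the pairwise orthogonal decomposition of $H$ gives the final identity.

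For~\ref{en:lor-unequi} I would use the two decompositions $H=\cR^\perp\oplus\bigoplus_{k=1}^\infty\cH_k=\cS^\perp\oplus\bigoplus_{k=1}^\infty\cK_k$ from~\ref{en:lor-dense}, choose a unitary $U_0\colon\cR^\perp\to\cS^\perp$ and unitaries $U_n\colon\cH_n\to\cK_n$ (possible by the dimension hypotheses), and set $U\coloneqq U_0\oplus\bigoplus_{n=1}^\infty U_n$, which is unitary on $H$. Since every element of $\cR$ has zero $\cR^\perp$-component and $U$ acts isometrically on each orthogonal summand, $U$ maps $\cR$ into $\cS$; applying the same argument to $U^{-1}$ gives $U\cR=\cS$. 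For~\ref{en:lor-clos}: if $\cH_n=\{0\}$ for all $n>n_0$ then $\cR=\bigoplus_{k=1}^{n_0}\cH_k$ is a finite orthogonal sum of closed subspaces and so is closed; if on the other hand $\cH_{n_j}\ne\{0\}$ for some strictly increasing sequence $(n_j)$, pick unit vectors $u_j\in\cH_{n_j}$ and observe that $x\coloneqq\sum_j 2^{-n_j}u_j$ satisfies $\sum_j(2^{-n_j})^2<\infty$, so $x\in\bigoplus_{k=1}^\infty\cH_k=\clos{\cR}$, while $\sum_j 4^{n_j}(2^{-n_j})^2=\infty$, so $x\notin\cR$; hence $\cR$ is not closed.

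The technical heart is~\ref{en:lor-contain}. Let $y=\sum_n y_n\in\cS$ with $y_n\in\cK_n$ and $S\coloneqq\sum_n 4^n\norm{y_n}^2<\infty$. Using $\cK_n\subset\bigoplus_{k=1}^n\cH_k$, decompose each $y_n$ orthogonally as $y_n=\sum_{k=1}^n y_{n,k}$ with $y_{n,k}\in\cH_k$ and $\sum_{k=1}^n\norm{y_{n,k}}^2=\norm{y_n}^2$. From $\norm{y_{n,k}}\le\norm{y_n}\le\sqrt{S}\,2^{-n}$ the double family $(y_{n,k})$ is absolutely summable in $H$, so it may be reordered: setting $x_k\coloneqq\sum_{n\ge k}y_{n,k}\in\cH_k$ one obtains $y=\sum_k x_k$. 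The decisive estimate applies the Cauchy--Schwarz inequality with the weights $4^n$:
\[
    4^k\norm{x_k}^2\le 4^k\Bigl(\sum_{n\ge k}\norm{y_{n,k}}\Bigr)^2\le 4^k\Bigl(\sum_{n\ge k}4^{-n}\Bigr)\Bigl(\sum_{n\ge k}4^n\norm{y_{n,k}}^2\Bigr)=\tfrac43\sum_{n\ge k}4^n\norm{y_{n,k}}^2,
\]
since $\sum_{n\ge k}4^{-n}=\tfrac43\,4^{-k}$. Summing over $k$ and interchanging the order of summation gives $\sum_k 4^k\norm{x_k}^2\le\tfrac43\sum_n 4^n\sum_{k=1}^n\norm{y_{n,k}}^2=\tfrac43\,S<\infty$, so $y\in\cR$. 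The point I expect to require the most care is exactly that the geometric factor $4^{-n}$ produced by Cauchy--Schwarz cancels the prefactor $4^k$, so that interchanging the summations leaves a convergent bound rather than a divergent one; the remaining work --- verifying absolute convergence and carrying out the Fubini-type reordering of the Hilbert space-valued double series --- is routine.
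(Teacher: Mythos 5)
Your proof is correct and follows essentially the same route as the paper: parts \ref{en:lor-dense}--\ref{en:lor-clos} are the same routine observations, and for \ref{en:lor-contain} you use the identical regrouping $x_k=\sum_{n\ge k}y_{n,k}$ of the orthogonal components. Your weighted Cauchy--Schwarz estimate with the factor $\tfrac43$ is in fact a more careful justification of the finiteness of $\sum_k 4^k\norm{x_k}^2$ than the paper's one-line bound (the vectors $y_{n,k}$ for varying $n$ all lie in the same $\cH_k$ and are not orthogonal, so a triangle-inequality-plus-Cauchy--Schwarz step of exactly your kind is needed); since only finiteness matters, the conclusion is unaffected.
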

\begin{proof}
Statements~\ref{en:lor-dense}, \ref{en:lor-unequi} and \ref{en:lor-clos} are easy.

\ref{en:lor-contain}: 
Let $x\in\cS$.
By Lemma~\ref{lem:opran-seq} one can write $x=\sum_{n=1}^\infty x_n$,
where $x_n\in\cK_n$ for all $n\in\NN$ and $\sum_{n=1}^\infty 4^n\norm{x_n}^2<\infty$.
By the assumption it follows that for all $n\in\NN$ one can uniquely write
\[
    x_n = x^{(n)}_1+\dots+x^{(n)}_n
\]
with $x^{(n)}_k\in\cH_k$ for all $k\in\{1,\dots,n\}$.
Then
\begin{equation}\label{eq:def-yk}
\sum_{k=1}^\infty 4^k\sum_{n=k}^\infty \norm[\big]{x^{(n)}_k}^2 =
\sum_{n=1}^\infty\sum_{k=1}^n 4^k\norm[\big]{x^{(n)}_k}^2
	\le \sum_{n=1}^\infty 4^n \norm{x_n}^2 < \infty.
\end{equation}
For all $k\in\NN$ define $y_k := \sum_{n=k}^\infty x^{(n)}_k$, which converges due to~\eqref{eq:def-yk}. Moreover,
$y_k\in\cH_k$ for all $k\in\NN$ and
\[
	\sum_{k=1}^\infty y_k = \sum_{k=1}^\infty \sum_{n=k}^\infty x^{(n)}_k = \sum_{n=1}^\infty \sum_{k=1}^n x^{(n)}_k = x.
\]
Furthermore, by~\eqref{eq:def-yk} one has
\[
	\sum_{k=1}^\infty 4^k \norm{y_k}^2 
	\le \sum_{n=1}^\infty 4^n \norm{x_n}^2 < \infty.
\]
Therefore $x\in\cR$ by Lemma~\ref{lem:opran-seq}.
\end{proof}
\begin{remark}
Putting parts of $\cH_n$ into later spaces $\cH_k$ with $k\ge n$ potentially makes the represented operator range smaller.
Conversely, 
putting parts of $\cH_n$ into earlier spaces $\cH_k$ with $k\le n$ for infinitely many $n\in\NN$ potentially makes the represented operator range larger.
Both of these statements formally follow from Lemma~\ref{lem:or}\,\ref{en:lor-contain}.
\end{remark}

The next lemma compares the ranges of an operator and its adjoint. The following example highlights the main difficulty.
\begin{example}
Let $H=\ell^2(\NN)$ and let $(e_k)$ be the usual orthonormal basis. Let $A\in\Linop(H)$ be given by $Ae_k = e_{2k}$ for
all $k\in\NN$.
Note that $A$ is a partial isometry with initial space $H$ and final space $\clos{\linspan}\{e_{2k}:k\in\NN\}$.
It is obvious that there exists a unitary operator $U$ such that $U\rg A\cap\rg A=\{0\}$, but such an operator does not exist for $\rg A^*=H$.
\end{example}

\begin{lemma}\label{lem:opran-adj}
Let $A\in\Linop(H)$ and suppose $(\cH_n)$ represents $\rg A$.
Then there exists an orthogonal sequence $(\cK_n)$ representing $\rg A^*$ such that
$\dim\cK_n=\dim\cH_n$ for all $n\in\NN$.
Moreover, there exists a unitary operator $U$ such that $U\rg A=\rg A^*$ if and only if $\dim\ker A=\dim\ker A^*$.
\end{lemma}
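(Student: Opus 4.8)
The plan is to exploit the polar decomposition of $A$ together with the spectral picture provided by Lemma~\ref{lem:opran-seq}. Write $A = V\abs{A}$, where $\abs{A} = (A^*A)^{1/2}$ and $V$ is the partial isometry with initial space $\clos{\rg\abs{A}} = (\ker A)^\perp$ and final space $\clos{\rg A}$. Since $\rg A = \rg V\abs{A} = V(\rg\abs{A})$ and $V$ is isometric on $(\ker A)^\perp \supset \rg\abs{A}$, and since $\rg A^* = \rg\abs{A}$ by Douglas' lemma (Proposition~\ref{prop:opran}\,\ref{en:or-psa} applied to $A^*$, noting $A^*A = (A^*)(A^*)^*$, so $\rg A^* = \rg(A^*A)^{1/2} = \rg\abs{A}$), the operator $V$ maps $\rg A^*$ isometrically onto $\rg A$. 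Thus if $(\cK_n)$ represents $\rg A^* = \rg\abs{A}$, then $(V\cK_n)$ is an orthogonal sequence of closed subspaces with $\dim V\cK_n = \dim\cK_n$, and $(V\cK_n)$ represents $\rg A = V(\rg\abs{A})$ because $V$ is isometric on the relevant subspace and hence carries the defining weighted-$\ell^2$ condition across verbatim. Running this in the direction we need: start from the given $(\cH_n)$ representing $\rg A$, pull back by $V^*$ (which is isometric on $\clos{\rg A}$), and note that $V^*\cH_n \subset V^*\clos{\rg A} = (\ker A)^\perp = \clos{\rg\abs{A}}$, so $(V^*\cH_n)$ is an orthogonal sequence with $\dim V^*\cH_n = \dim\cH_n$; one checks it represents $\rg\abs{A} = \rg A^*$. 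Set $\cK_n \coloneqq V^*\cH_n$.

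For the second assertion, first suppose $\dim\ker A = \dim\ker A^*$. Apply Lemma~\ref{lem:or}\,\ref{en:lor-unequi} to $\cR = \rg A$ represented by $(\cH_n)$ and $\cS = \rg A^*$ represented by $(\cK_n)$: we have $\dim\cH_n = \dim\cK_n$ for all $n$ by the first part, and $(\rg A)^\perp = \ker A^*$ while $(\rg A^*)^\perp = \ker A$, so $\dim(\rg A)^\perp = \dim\ker A^* = \dim\ker A = \dim(\rg A^*)^\perp$. Hence there is a unitary $U$ with $U\rg A = \rg A^*$.

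Conversely, suppose $U$ is unitary with $U\rg A = \rg A^*$. The point is that a unitary carrying one operator range onto another forces agreement of both the orthogonal-complement dimensions and the dimensions in any representing sequence --- but the latter need some care since representing sequences are not unique. To get at $\dim\ker A = \dim(\rg A^*)^\perp$ and $\dim\ker A^* = \dim(\rg A)^\perp$ directly: since $U\rg A = \rg A^*$, taking closures gives $U\clos{\rg A} = \clos{\rg A^*}$, hence $U$ maps $(\rg A)^\perp = \ker A^*$ onto $(\rg A^*)^\perp = \ker A$, so $\dim\ker A^* = \dim\ker A$ immediately. This last observation in fact makes the converse almost trivial, so the bulk of the work is the first paragraph, and the main obstacle I expect there is checking carefully that the weighted sum condition $\sum 4^n\norm{x_n}^2 < \infty$ transfers correctly under $V^*$ --- i.e.\ that $(V^*\cH_n)$ genuinely represents $\rg A^*$ and not merely a subspace of it. This should follow from Lemma~\ref{lem:or}\,\ref{en:lor-contain} applied in both directions (using that $V^*$ restricted to $\clos{\rg A}$ is a bijective isometry onto $\clos{\rg\abs{A}}$, so it and its inverse preserve the containment hypotheses of that part), together with the observation that $\clos{\rg\abs{A}} = \bigoplus_n V^*\cH_n$ by Lemma~\ref{lem:or}\,\ref{en:lor-dense}.
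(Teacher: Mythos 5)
Your proposal is correct and follows essentially the same route as the paper: polar decomposition, the observation that the partial isometry $V$ restricts to a unitary between the closures of the two ranges carrying $\rg A$ onto $\rg A^*$ (you derive $\rg A^*=\rg\abs{A}$ from Douglas' lemma where the paper cites the identity $\rg A^*=V^*\rg A$ directly), the definition $\cK_n:=V^*\cH_n$, and Lemma~\ref{lem:or}\,\ref{en:lor-unequi} for one direction of the second claim. Your converse argument via $U\clos{\rg A}=\clos{\rg A^*}$ and preservation of orthogonal complements is a slight, equally valid variant of the paper's computation $\ker A^*=U^*\ker A$.
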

\begin{proof}
Let $A=VP$ be the polar decomposition of $A$; i.e.\ $P$ is a positive operator and $V$ is a partial isometry
on $H$ with initial space $(\ker A)^\perp$ and final space $(\ker A^*)^\perp$.
By~\cite[Formulas after Theorem~7.2, p.\,138]{Schm2012} one has $\rg A^* = V^*\rg A$.
In particular, $\restrict{V^*}{(\ker A^*)^\perp}\colon (\ker A^*)^\perp \to (\ker A)^\perp$ is a unitary map that maps $\rg A$ onto $\rg A^*$.

For all $n\in\NN$ define $\cK_n := V^*\cH_n$. Since $\cH_n\subset\rg A\subset(\ker A^*)^\perp$, it follows that
$\restrict{V^*}{\cH_n}$ is an isometry and hence $\dim\cK_n=\dim\cH_n$ for all $n\in\NN$.
The sequence $(\cK_n)$ represents an operator range, which is the image under $V^*$ of the operator range represented by
$(\cH_n)$, i.e.\ $\rg A^*$.

Now we prove the second statement.
First suppose that $U\rg A = \rg A^*$ for a unitary operator $U$. Then
\[
    \ker A^* = (\rg A)^\perp = \bigl(\rg (U^*A^*)\bigr)^\perp = \ker(AU) = U^*\ker A.
\]
Hence $\dim\ker A^* = \dim\ker A$.

Conversely, suppose that $\dim\ker A=\dim\ker A^*$. Using this and the first statement of the lemma, it follows from Lemma~\ref{lem:or}\,\ref{en:lor-unequi} that there exists a unitary operator $U$ as claimed.
\end{proof}

The following is now a straightforward consequence.
\begin{proposition}
Let $H$ be a Hilbert space and $A$ a densely defined closed operator in $H$.
Suppose that $\rho(A)\ne\emptyset$.
Then there exists a unitary operator $U$ such that $D(U^*AU) \cap D(A)=\{0\}$
if and only if there exists a unitary operator $V$ such that $D(V^*A^*V)\cap D(A^*)=\{0\}$. 
\end{proposition}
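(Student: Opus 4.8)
The plan is to reduce the assertion about the unbounded operators $A$ and $A^*$ to a statement about the operator ranges $D(A)$ and $D(A^*)$, and then to quote Lemma~\ref{lem:opran-adj}. Fix $\lambda\in\rho(A)$ and set $B\coloneqq(A-\lambda I)^{-1}$. Since $A$ is closed and $\lambda\in\rho(A)$, the operator $B$ is a bounded injective operator on $H$ with $\rg B=D(A)$. Moreover $\conj\lambda\in\rho(A^*)$ and $B^*=(A^*-\conj\lambda I)^{-1}$, so that $\rg B^*=D(A^*)$. The first (routine) observation is that for every unitary operator $U$ one has $D(U^*AU)=\{x\in H:Ux\in D(A)\}=U^*D(A)$; hence, writing $W\coloneqq U^*$, the condition $D(U^*AU)\cap D(A)=\{0\}$ is equivalent to $W\rg B\cap\rg B=\{0\}$, and $U\mapsto U^*$ is a bijection of $\cU$ onto itself. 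The same reasoning applies to $A^*$ in place of $A$. Thus it suffices to prove that there is a unitary $W$ with $W\rg B\cap\rg B=\{0\}$ if and only if there is a unitary $W'$ with $W'\rg B^*\cap\rg B^*=\{0\}$.

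The key step is to produce a fixed unitary $U_0$ on $H$ with $U_0\rg B=\rg B^*$. Since $B$ is injective, $\ker B=\{0\}$, and since $A$ is densely defined, $\ker B^*=(\rg B)^\perp=\clos{D(A)}^\perp=\{0\}$. In particular $\dim\ker B=\dim\ker B^*$, so Lemma~\ref{lem:opran-adj} yields a unitary operator $U_0$ on $H$ with $U_0\rg B=\rg B^*$, i.e.\ $U_0D(A)=D(A^*)$.

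Finally I would exploit that the property in question is conjugation invariant. If $W$ is unitary with $W\rg B\cap\rg B=\{0\}$, put $W'\coloneqq U_0WU_0^*$; then $W'$ is unitary and, using $U_0\rg B=\rg B^*$,
\[
    W'\rg B^*\cap\rg B^*=U_0WU_0^*\rg B^*\cap U_0\rg B=U_0\bigl(W\rg B\cap\rg B\bigr)=\{0\}.
\]
Equivalently, $W\mapsto U_0WU_0^*$ maps the set of unitaries $W$ with $W\rg B\cap\rg B=\{0\}$ bijectively onto the set of unitaries $W'$ with $W'\rg B^*\cap\rg B^*=\{0\}$, so one set is nonempty precisely when the other is. The converse implication is the same statement read backwards; note that $A^*$ is again densely defined and closed with $\conj\lambda\in\rho(A^*)$ and $A^{**}=A$, so no asymmetry arises.

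I do not expect a serious obstacle here: the whole statement falls out once the resolvent identifies $D(A)$ and $D(A^*)$ with the operator ranges $\rg B$ and $\rg B^*$ and one observes that $B$ is injective with dense range, so that Lemma~\ref{lem:opran-adj} applies with $\dim\ker B=\dim\ker B^*=0$. The only points needing a little care are the elementary domain identity $D(U^*AU)=U^*D(A)$ and the verification that passing to the adjoint preserves all the standing hypotheses (densely defined, closed, nonempty resolvent set).
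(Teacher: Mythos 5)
Your proposal is correct and follows essentially the same route as the paper: identify $D(A)=\rg B$ and $D(A^*)=\rg B^*$ via the resolvent, invoke Lemma~\ref{lem:opran-adj} with $\ker B=\ker B^*=\{0\}$ to obtain a unitary carrying $\rg B$ onto $\rg B^*$, and transfer the disjointness property by conjugation. The only differences are cosmetic (sign convention in the resolvent and the explicit remark that $W\mapsto U_0WU_0^*$ is a bijection).
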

\begin{proof}
Suppose that $\lambda\in\rho(A)$ and define $B := (\lambda I - A)^{-1}\in\Linop(H)$.
Then $\rg B = D(A)$, $B^*=(\conj{\lambda}I-A^*)^{-1}$ and $\rg B^* = D(A^*)$.
Moreover $\ker B = \ker B^* = \{0\}$. 
By Lemma~\ref{lem:opran-adj} there exists a unitary operator $W$ such that $W\rg B = \rg B^*$.
Now suppose that $U$ is a unitary operator such that $D(U^*AU)\cap D(A)=\{0\}$.
As $D(U^*AU)=U^*D(A)=U^*\rg B$, one has $U^*\rg B \cap \rg B = \{0\}$.
Then $V := WUW^*$ is a unitary operator and
\[
    V^*\rg B^*\cap \rg B^* = WU^*\rg B\cap W\rg B = W(U^*\rg B\cap\rg B)=\{0\}.
\]
As $V^*\rg B^* = V^*D(A^*) = D(V^*A^*V)$, the operator $V$ has the asserted property.
The converse direction follows from swapping the roles of $B$ and $B^*$.
\end{proof}

\section{Von Neumann's theorem for general Hilbert spaces}\label{sec:vNeu-gen}

To be self-contained later on, we start this section by presenting a proof along the lines of~\cite[Theorem and Corollary, p.\,520]{Isr2004} for the following 
variant of Theorem~\ref{thm:vNeu-op-ran}. Note that we do not assume that $H$ is separable, but as in~\cite{Isr2004} we make the strong
assumption that the operator range is the range of a compact operator.
\begin{proposition}\label{prop:israel}
Let $H$ be an infinite-dimensional Hilbert space and $T\in\Linop(H)$ be a compact operator.
Then the set 
\[
    \cG := \{U\in\cU : U\rg T\cap \rg T=\{0\}\}
\]
is a dense $G_\delta$ set in $\cU$ with respect to the uniform, strong and weak operator topology.
In particular, $\cG$ is not empty.
\end{proposition}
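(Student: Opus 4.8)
The plan is to exhibit $\cG$ as a countable intersection of sets that are each open and dense in $\cU$, where $\cU$ carries the uniform (equivalently, as we shall see, the strong or weak) operator topology. Since $T$ is compact, write its range using Lemma~\ref{lem:opran-seq}: there is an orthogonal sequence $(\cH_n)$ representing $\rg T$, and because $T$ is compact each $\cH_n$ is finite-dimensional (this is where compactness is used crucially — it forces the spectral subspaces of $(TT^*)^{1/2}$ above any positive level to be finite-dimensional). First I would reduce $U\rg T\cap\rg T=\{0\}$ to a countable family of conditions. The key observation is that $\rg T\subseteq\bigoplus_k\cH_k$ and, more usefully, every nonzero $x\in\rg T$ has a nonzero component in some finite partial sum $F_m:=\bigoplus_{k=1}^m\cH_k$. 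So if $Ux$ and $x$ both lie in $\rg T$ and are nonzero, then for some $m$ the vector $x$ has nonzero $F_m$-component; combining this with the fact that $\rg T$ sits inside $\overline{\rg T}=\bigoplus_k\cH_k$, one can localise the obstruction to the finite-dimensional subspaces $F_m$. I would define, for each pair $(m,n)$ of positive integers,
\[
    \cG_{m,n}:=\{U\in\cU : UF_m\cap F_n=\{0\}\ \text{after projecting appropriately}\},
\]
more precisely the set of $U$ for which no nonzero vector of $\rg T$ with $F_m$-component of norm at least $\tfrac1n$ its total norm is mapped by $U$ into $\rg T$; and then argue $\cG=\bigcap_{m,n}\cG_{m,n}$.

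The main work is to show each $\cG_{m,n}$ is open and dense. Openness should follow from a compactness-and-continuity argument: the set of candidate "bad" vectors $x$ (normalised, with $F_m$-component bounded below) intersected with the finite-dimensional $F_{m'}$ is compact, and the condition "$Ux\notin\rg T$" is, after passing to a suitable finite-dimensional reduction using that $\rg T$ is squeezed between the $F_k$'s, an open condition on $U$ uniformly in $x$. For density, given an arbitrary unitary $U_0$ and $\eps>0$, I would perturb $U_0$ on a large-but-finite-dimensional reducing subspace: choose $N$ large, and modify $U_0$ by composing with a unitary that is the identity off $F_N\oplus U_0F_N$ (or a slightly larger finite-dimensional space containing both) and that "rotates" $F_m$ into a generic position relative to $F_n$. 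Because $\dim H=\infty$ while the $F_k$ are finite-dimensional, there is always enough room to make $UF_m$ avoid $F_n$ (indeed avoid $\bigoplus_{k=1}^{k_0}\cH_k$ for any fixed $k_0$) by an arbitrarily small unitary perturbation — this is the same mechanism as in Example~\ref{ex:ce-vNeu-dom} but now used constructively. The Baire category theorem in the complete metric space $(\cU,\text{uniform})$ then gives that $\cG=\bigcap_{m,n}\cG_{m,n}$ is a dense $G_\delta$, hence nonempty.

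Finally, for the statement about the strong and weak operator topologies: on $\cU$ the weak, strong and uniform topologies are generally distinct, but I would show that each $\cG_{m,n}$ is still open in the (coarser) weak operator topology — because the relevant condition only involves finitely many coordinates, namely the behaviour of $U$ on the finite-dimensional space $F_m$ and the finitely many inner products detecting membership in $F_n$ — and that density only gets easier in a coarser topology. Alternatively, invoke that on the unitary group the weak and strong operator topologies coincide, and that $\cG$ being dense $G_\delta$ in the finer uniform topology implies the same in the coarser one provided each $\cG_{m,n}$ is simultaneously open in the coarse topology and dense in the fine one. I expect the main obstacle to be the density step: making precise the "generic rotation" that moves $UF_m$ entirely out of $\rg T$ by a small perturbation, and verifying that the perturbation can be taken unitary and $\eps$-close while only altering $U_0$ on a finite-dimensional reducing subspace — the bookkeeping with the subspaces $\cH_n$, $F_m$, and $U_0F_m$ is where the care is needed, and compactness of $T$ is exactly what keeps all these spaces finite-dimensional so that the perturbation argument closes.
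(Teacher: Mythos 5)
Your overall architecture (write $\cG$ as a countable intersection of sets, prove each one open and dense, and apply the Baire category theorem in the complete metric space $(\cU,\norm{\cdot})$) matches the paper's, but your choice of the countable filtration is where the argument breaks. The paper exhausts $\rg T\setminus\{0\}$ by the sets $K_k:=\clos{TB(0,k)}\setminus B(0,1/k)$; the decisive point is that each $K_k$ is \emph{compact} (the image of a bounded set under the compact operator $T$ is precompact) and is nevertheless still contained in $\rg T\setminus\{0\}$ (a weak-compactness argument). Disjointness of two compact sets is an open condition on $U$ in the compact--open (equivalently strong) topology, and can be achieved by an arbitrarily small finite rotation. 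Your filtration instead fixes the relative size of the $F_m$-component, i.e.\ works with $S_{m,n}:=\{x\in\rg T:\norm{x}=1,\ \norm{P_{F_m}x}\ge 1/n\}$. This set is \emph{not} precompact: take $x_j=\tfrac1n v+c_je^{(j)}$ with $v\in F_m$ fixed, $e^{(j)}\in\cH_j$ unit vectors and suitable constants $c_j$; these are finitely supported, hence in $\rg T$, satisfy the constraint, and have no convergent subsequence. Moreover the norm-closure of $S_{m,n}$ meets $\clos{\rg T}\setminus\rg T$. Consequently the openness of your $\cG_{m,n}$ has no proof: if $U_j\to U$ with witnesses $x_j\in S_{m,n}$ and $U_jx_j\in\rg T$, you cannot extract a norm-convergent subsequence of $(x_j)$, and the condition ``$Ux\in\rg T$'' is not detectable from finitely many coordinates of $x$ or of $Ux$ --- it is a summability condition on the whole tail, $\sum_k 4^k\norm{P_{\cH_k}Ux}^2<\infty$. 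The phrase ``after passing to a suitable finite-dimensional reduction \dots\ an open condition on $U$ uniformly in $x$'' is exactly the missing step, and I do not see how to supply it for your sets.

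The density step has the same defect: rotating the finite-dimensional space $F_m$ (or $F_N$) into generic position controls only where $UP_{F_m}x$ lands, not where $Ux$ lands for a vector $x$ whose mass is spread over infinitely many $\cH_k$, and membership of $Ux$ in $\rg T$ is determined by the full vector. The repair is to filter by the norm of a $T$-preimage rather than by the location of the mass: replace $S_{m,n}$ by $K_k=\clos{TB(0,k)}\setminus B(0,1/k)$. That is where compactness of $T$ actually enters; your observation that each $\cH_n$ can be taken finite-dimensional is correct but is not the operative consequence of compactness here. One smaller remark: on $\cU$ the weak and strong operator topologies coincide, so the three-topology claim reduces to proving each piece open in the strong topology and dense in the uniform one; your discussion of this point is headed in the right direction but should rest on that coincidence rather than on openness of the (unproved) $\cG_{m,n}$ in the weak topology.
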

\begin{proof}
It suffices to prove that $\cG$ is a $G_\delta$ set with respect to the strong operator topology and
dense in $\cU$ with respect to the uniform operator topology.
It is easily seen that the weak operator topology on $\cU$ agrees with the strong operator topology, see also~\cite[Remark~4.10]{Tak2002}).

Define
$K_k := \clos{TB(0,k)}\setminus B(0,1/k)$ 
and $\cG_k := \{U\in\cU : UK_k\cap K_k=\emptyset\}$
for all $k\in\NN$. Then $K_k$ is a compact set for all $k\in\NN$ since $T$ is compact.
One has $\bigcup_{k\in\NN} K_k=\rg T\setminus\{0\}$.
In fact, the inclusion $\rg T\setminus\{0\}\subset\bigcup_{k\in\NN} K_k$ is obvious.
For the other direction, fix a $k\in\NN$ and suppose that $(y_n)$ is a sequence in $TB(0,k)\setminus B(0,1/k)$ such that $y_n\to y$ for a $y\in H$.
Then $y\ne 0$ and there exists a sequence $(x_n)$ in $B(0,k)$ such that $Tx_n=y_n$ for all $n\in\NN$. After passing to a subsequence, we may assume that
there exists an $x\in H$ such that $x_n\wto x$. As $T$ is compact, it follows that $y = \lim_{n\to\infty}
y_n=\lim_{n\to\infty} Tx_n = Tx$. So $y\in\rg T\setminus\{0\}$.
The identity $\bigcup_{k\in\NN} K_k=\rg T\setminus\{0\}$ and
the monotonicity $K_1\subset K_2\subset\dots$ imply that $\bigcap_{k\in\NN}\cG_k=\cG$.

Fix a $k\in\NN$. 
\begin{asparaenum}
\item[\textit{Claim 1:}]
The set $\cG_k$ is open in $\cU$ with respect to the strong operator topology.\\
To this end, let $U\in\cG_k$. Since $K_k$ is compact, there exists an $\eps>0$ such that $d(UK_k,K_k)>\eps$.
Let $\cA := \{ V\in\cU: \sup_{x\in K_k}\norm{(V-U)x}<\eps\}$. Note that $\cA$ is an open neighbourhood of $U$ in the
compact--open topology on $\cU$.
If $V\in \cA$, then
\[
    \norm{Vx-y}\ge \norm{Ux-y}-\norm{(V-U)x} \ge d(UK_k, K_k)-\eps>0
\]
for all $x,y\in K_k$. Therefore $V\in\cG_k$.
So $\cA\subset \cG_k$. 
It is readily verified that the compact--open topology agrees with the strong operator topology on
$\cU$~\cite[Theorem~43.14]{Wil70}. So we have proved the first claim. 

\item[\textit{Claim 2:}]
The set $\cG_k$ is dense in $\cU$ with respect to the uniform operator topology.\\ 
Let $V\in\cU$ and $\eps\in(0,1]$. Set $\delta:=\frac{1}{3k}\sin\eps>0$.
Cover the compact set $VK_k\cup K_k$ by balls $B_\delta(x_1),\dots,B_\delta(x_N)$ with $x_j\in VK_k\cup K_k$ for all
$j\in\{1,\dots,N\}$. Let $p_1,\dots,p_n$ be an orthonormal basis of $L := \linspan\{x_1,\dots,x_N\}$.
Let $q_1,\dots,q_n$ be an orthonormal system in $L^\perp$ and set $M := \linspan\{q_1,\dots,q_n\}$.
Define $W_\eps \in\cU$ such that $W_\eps$ is the 
identity on $(L \oplus M)^\perp$ and $W_\eps$ is the 
rotation of $p_j$ towards $q_j$ by the angle $\eps$ in the 
two-dimensional space $\linspan\{p_j,q_j\}$ for all 
$j\in\{1,\dots,n\}$.
Then
$\norm{W_\eps x_l - x_j}\ge d(W_\eps x_l, L) \ge \norm{x_l}\sin\eps \ge \frac{1}{k}\sin\eps=3\delta$
for all $j,l\in\{1,\dots,N\}$.
Therefore $d(W_\eps VK_k,K_k)\ge\delta$ and $W_\eps V\in\cG_k$.
Moreover,
$\norm{W_\eps V - V} = \norm{W_\eps - I}\le\eps$. We have established the density of $\cG_k$.
\end{asparaenum}

As $\cU$ is a Baire space for the uniform operator topology, by the above and the Baire category theorem 
$\cG = \bigcap_{k\in\NN}\cG_k$ is dense in $\cU$ in the uniform operator topology.
Based on the initial remarks we conclude that $\cG$ is a dense $G_\delta$ set in $\cU$ with respect to the uniform,
strong and weak operator topology.
\end{proof}

Now we give a technical characterisation of when Theorem~\ref{thm:vNeu-op-ran} extends to general Hilbert spaces.
Note that the space occurring in the right hand side of~\eqref{eq:dim-seq-ineq} can be
rewritten using the last identity in Lemma~\ref{lem:or}\,\ref{en:lor-dense}.
\begin{proposition}\label{prop:char}
Let $\cR$ be an operator range in $H$. The following conditions are equivalent:
\begin{romanenum}
\item\label{en:lc-U}
There exists a unitary operator $U$ such that $U\cR\cap\cR=\{0\}$.
\item\label{en:lc-all-rep}
If $\cR$ is represented by $(\cH_n)$, then 
\begin{equation}\label{eq:dim-seq-ineq}
    \dim \bigoplus_{k=1}^n\cH_k \le \dim\Bigl(\cR^\perp\oplus\bigoplus_{k=n+1}^\infty \cH_k\Bigr)
\end{equation}
for all $n\in\NN$.
\item\label{en:lc-one-rep}
There exists a sequence $(\cH_n)$ that represents $\cR$ such that~\eqref{eq:dim-seq-ineq} holds.
\end{romanenum}
\end{proposition}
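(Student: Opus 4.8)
The plan is to prove the implications $\ref{en:lc-all-rep}\Rightarrow\ref{en:lc-one-rep}\Rightarrow\ref{en:lc-U}\Rightarrow\ref{en:lc-all-rep}$. The first of these is immediate, since $\cR$ admits at least one representing sequence by Lemma~\ref{lem:opran-seq}.

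For $\ref{en:lc-U}\Rightarrow\ref{en:lc-all-rep}$ I would argue directly. Assume $U\in\cU$ satisfies $U\cR\cap\cR=\{0\}$, let $(\cH_n)$ be an arbitrary sequence representing $\cR$, fix $n\in\NN$ and set $F_n:=\bigoplus_{k=1}^n\cH_k$. By Lemma~\ref{lem:opran-seq} one has $F_n\subset\cR$, and by Lemma~\ref{lem:or}\,\ref{en:lor-dense} the orthogonal complement $F_n^\perp$ is exactly the space $\cR^\perp\oplus\bigoplus_{k=n+1}^\infty\cH_k$ on the right-hand side of~\eqref{eq:dim-seq-ineq}. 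Let $P$ denote the orthogonal projection of $H$ onto $F_n^\perp$ and consider $j:=P\,\restrict{U}{F_n}\in\Linop(F_n,F_n^\perp)$. If $j$ were not injective, there would be an $x\in F_n\setminus\{0\}$ with $Ux=(I-P)Ux\in F_n\subset\cR$; since also $x\in F_n\subset\cR$ and $Ux\ne 0$, we would obtain $0\ne Ux\in U\cR\cap\cR$, a contradiction. Hence $j$ is injective, and Lemma~\ref{lem:cmp-dim} gives $\dim F_n=\dim\clos{\rg j}\le\dim F_n^\perp$, which is~\eqref{eq:dim-seq-ineq}.

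The bulk of the work is $\ref{en:lc-one-rep}\Rightarrow\ref{en:lc-U}$. Suppose $(\cH_n)$ represents $\cR$ and~\eqref{eq:dim-seq-ineq} holds for all $n$. First I would reduce to a construction problem: by Lemma~\ref{lem:or}\,\ref{en:lor-unequi} it is enough to produce \emph{some} operator range $\cS$ in $H$ with $\dim\cS^\perp=\dim\cR^\perp$, represented by a sequence $(\cK_n)$ with $\dim\cK_n=\dim\cH_n$ for all $n$, and satisfying $\cS\cap\cR=\{0\}$; a unitary $U$ with $U\cR=\cS$ then witnesses $\ref{en:lc-U}$. Two special cases can be dealt with immediately. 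If $\cR$ is closed, then by Lemma~\ref{lem:or}\,\ref{en:lor-clos} the inequality~\eqref{eq:dim-seq-ineq} for large $n$ reduces to $\dim\cR\le\dim\cR^\perp$, so there is a unitary $U$ on $H=\cR\oplus\cR^\perp$ with $U\cR\subset\cR^\perp$, and then $U\cR\cap\cR=\{0\}$. If $\cR$ is not closed but $\clos{\cR}$ is separable, then by Proposition~\ref{prop:opran}\,\ref{en:or-psa} $\cR$ is a nonclosed operator range in the separable Hilbert space $\clos{\cR}$, Theorem~\ref{thm:vNeu-op-ran} yields a unitary $U_0$ on $\clos{\cR}$ with $U_0\cR\cap\cR=\{0\}$, and $U:=U_0\oplus I_{\cR^\perp}$ works.

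In the remaining situation ($\cR$ nonclosed, $\clos{\cR}$ nonseparable) the plan is to split $H$ into an orthogonal family $(H_\iota)_{\iota\in\Lambda}$ of \emph{separable} closed subspaces, each spanned by fixed orthonormal vectors drawn from the blocks $\cH_n$ and from $\cR^\perp$, in such a way that the orthogonal projection onto each $H_\iota$ maps $\cR$ into $\cR$ (so that every $x\in\cR$ decomposes as $x=\sum_\iota x_\iota$ with $x_\iota\in\cR_\iota:=\cR\cap H_\iota$) and that for every $\iota$ the operator range $\cR_\iota$ in $H_\iota$ is either closed with $\dim\cR_\iota\le\dim(H_\iota\ominus\cR_\iota)$ or nonclosed. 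Granting this, I would apply the two special cases inside each separable $H_\iota$ to obtain a unitary $U_\iota$ on $H_\iota$ with $U_\iota\cR_\iota\cap\cR_\iota=\{0\}$ and set $U:=\bigoplus_\iota U_\iota$; since the decomposition is adapted to $\cR$ (and hence to $U\cR$), one concludes $U\cR\cap\cR=\{0\}$. I expect the main obstacle to be the existence of such a decomposition. It amounts to a cardinal-arithmetic allocation: the orthonormal vectors of each block $\cH_n$ must be distributed among countably many layers that each meet infinitely many of the nonzero blocks, with an oversized block being partly transferred into $\cR^\perp$, where it is harmless because it is orthogonal to $\cR$. The point is that the hypothesis~\eqref{eq:dim-seq-ineq}, which by Lemma~\ref{lem:or}\,\ref{en:lor-dense} says precisely that $\dim F_n\le\dim F_n^\perp$ for all $n$ with $F_n=\bigoplus_{k=1}^n\cH_k$, should be exactly what makes this allocation possible; it is automatic when $H$ is separable, in agreement with Theorem~\ref{thm:vNeu-op-ran}.
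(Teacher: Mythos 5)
Your treatment of (ii)$\Rightarrow$(iii) and (i)$\Rightarrow$(ii) is correct; the latter is in substance the paper's argument (there by contraposition), using Lemma~\ref{lem:cmp-dim} to show that $P_{F_n^\perp}\restrict{U}{F_n}$ must be injective. The two special cases you dispose of in (iii)$\Rightarrow$(i) -- $\cR$ closed, and $\cR$ nonclosed with $\clos{\cR}$ separable -- are also fine, and your reduction framework for the remaining case is sound: if $H=\bigoplus_\iota H_\iota$ with each $H_\iota$ an orthogonal sum of subspaces of the $\cH_n$ and of $\cR^\perp$, then the projections onto the $H_\iota$ do preserve $\cR$, one has $\cR\cap H_\iota$ represented by $(\cH_n\cap H_\iota)$, and a direct sum of unitaries $U_\iota$ with $U_\iota\cR_\iota\cap\cR_\iota=\{0\}$ gives $U\cR\cap\cR=\{0\}$.

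The genuine gap is that you do not construct the decomposition, and that construction is the entire content of the hard implication. You must distribute an orthonormal basis of each $\cH_n$ (of arbitrary cardinalities $\dim\cH_n$) together with a basis of $\cR^\perp$ over $\dim H$ many \emph{countable} layers so that every layer either meets infinitely many blocks, or receives at least as many $\cR^\perp$-vectors as block-vectors; and you must show that~\eqref{eq:dim-seq-ineq} -- equivalently, that every tail $\cR^\perp\oplus\bigoplus_{k>n}\cH_k$ has dimension $\dim H$ -- is what makes this possible. That is a nontrivial piece of cardinal bookkeeping: it requires at least a case distinction between $\dim\cR^\perp=\dim H$ (where the complement can dominate every layer) and $\dim\cR^\perp<\dim H$ (where all tails have full dimension and one must thread each layer through infinitely many blocks while respecting the budget $\dim\cH_n$ of each block, e.g.\ when the $\dim\cH_n$ form a strictly increasing sequence of cardinals). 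This is precisely the work the paper does in Lemma~\ref{lem:enlarge}, in the dual direction: instead of decomposing into separable pieces it \emph{enlarges} $\cR$ to a homogeneous range with $\dim\cH_n'=\dim H$ for all $n$, which is unitarily equivalent to the range of a $\dim H$-fold direct sum of the model operator of Example~\ref{ex:T-nc-range}, and then imports a unitary from Proposition~\ref{prop:israel}. Your route would very likely also succeed if carried out, but as written it reduces the theorem to an unproved combinatorial claim that you yourself flag as the main obstacle, so it is not yet a proof.
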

\begin{proof}\renewcommand{\qedsymbol}{}
`\ref{en:lc-U}$\Rightarrow$\ref{en:lc-all-rep}':
We give a proof by contraposition that uses the same argument as in Example~\ref{ex:ce-vNeu-dom}.
So suppose that~\eqref{eq:dim-seq-ineq} is violated for an $n\in\NN$. 
Replacing $\cH_1$ by $\cH_1\oplus\dots\oplus\cH_n$ and $\cH_k$ by $\{0\}$ for all $k\in\{2,\dots,n\}$,
which does not change the represented operator range,
we may assume without loss of generality that
\begin{equation}\label{eq:dim-simp-ineq}
    \dim \cH_1>\dim\Bigl(\cR^\perp\oplus \bigoplus_{k=2}^\infty \cH_k\Bigr).
\end{equation}
Let $U$ be a unitary operator and set $K := \cR^\perp\oplus \bigoplus_{k=2}^\infty \cH_k$.
Note that $K=\cH_1^\perp$.
By~\eqref{eq:dim-simp-ineq} and Lemma~\ref{lem:cmp-dim} the operator $P_K \restrict{U}{\cH_1}\colon \cH_1\to K$ is not
injective,
where $P_K$ is the orthogonal projection from $H$ onto $K$.
So there exists an $x\in\cH_1\setminus\{0\}$ such that $Ux\in K^\perp=\cH_1$.
In particular, $Ux\in U\cR\cap\cR$ and therefore $U\cR\cap\cR\ne \{0\}$.
Since this holds for every unitary operator $U$, Condition~\ref{en:lc-U} cannot hold.

`\ref{en:lc-all-rep}$\Rightarrow$\ref{en:lc-one-rep}':
Trivial.

`\ref{en:lc-one-rep}$\Rightarrow$\ref{en:lc-U}':
Suppose $\cR$ is represented by the sequence $(\cH_n)$ that satisfies~\eqref{eq:dim-seq-ineq}.
We distinguish two cases.

\textit{Case 1:} Suppose that $\dim H<\aleph_0$.\\
Then there exists an $n_0\in\NN$ such that $\dim\cH_{n}=0$ for all $n>n_0$.
Moreover, $\cR$ is closed and
\[
    \dim\clos{\cR} = \dim\bigoplus_{k=1}^{n_0}\cH_k\le\dim\cR^\perp.
\]
It follows easily (see also the proof of Corollary~\ref{cor:group}) that~\ref{en:lc-U} is satisfied.

\textit{Case 2:}
Suppose that $\dim H\ge\aleph_0$.\\
Before we can proceed with the proof, we need a lemma.
\end{proof}

\begin{lemma}\label{lem:enlarge}
Suppose $\dim H\ge\aleph_0$.
Let $\cR$ be an operator range in $H$.
Suppose there exists a sequence $(\cH_n)$ that represents $\cR$ such that~\eqref{eq:dim-seq-ineq} holds.
Then there exists a dense operator range $\cR'$ that is represented by a sequence $(\cH_n')$
such that $\cR\subset\cR'$ and $\dim\cH_n'=\dim H$ for all $n\in\NN$.
\end{lemma}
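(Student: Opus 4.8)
The plan is to exhibit a representing sequence $(\cH_n')$ for the desired $\cR'$, obtained by regrouping an orthonormal basis of $H$. Fix orthonormal bases $B_n$ of $\cH_n$ for $n\ge1$ and $B_0$ of $\cR^\perp$; by Lemma~\ref{lem:or}\,\ref{en:lor-dense} the union $B:=\bigcup_{n\ge0}B_n$ is an orthonormal basis of $H$, so $\kappa:=|B|=\dim H\ge\aleph_0$. It suffices to partition $B$ into sets $(B_n')_{n\ge1}$ with $|B_n'|=\kappa$ for all $n$ and $B_n\subseteq B_1'\cup\dots\cup B_n'$ for all $n\ge1$; for then $\cH_n':=\clos{\linspan B_n'}$ are pairwise orthogonal closed subspaces with $\dim\cH_n'=\kappa$ and $\bigoplus_n\cH_n'=H$, so by Lemma~\ref{lem:opran-seq} they represent an operator range $\cR'$, which is dense by Lemma~\ref{lem:or}\,\ref{en:lor-dense} and contains $\cR$ by Lemma~\ref{lem:or}\,\ref{en:lor-contain} (indeed $\cH_n\subseteq\bigoplus_{k=1}^n\cH_k'$). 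Note that $B_n\subseteq B_1'\cup\dots\cup B_n'$ is equivalent to $B_n'\subseteq B_0\cup\bigcup_{k\ge n}B_k$.

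The only consequence of~\eqref{eq:dim-seq-ineq} that I will use is that $\dim\bigl(\cR^\perp\oplus\bigoplus_{k\ge n}\cH_k\bigr)=\kappa$ for every $n\ge1$: with $a:=\dim\bigoplus_{k=1}^{n-1}\cH_k$ and $b:=\dim\bigl(\cR^\perp\oplus\bigoplus_{k\ge n}\cH_k\bigr)$ one has $a+b=\dim H=\kappa$ and $a\le b$ (trivially if $n=1$, by~\eqref{eq:dim-seq-ineq} otherwise), so $b$ is infinite and $b=a+b=\kappa$. If $\dim\cR^\perp=\kappa$, decompose $\cR^\perp=\bigoplus_{n\ge1}C_n$ with $\dim C_n=\kappa$ and take $\cH_n':=\cH_n\oplus C_n$. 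So assume $\dim\cR^\perp<\kappa$; then $\bigl|\bigcup_{k\ge n}B_k\bigr|=\kappa$ for every $n\ge1$, and since $|B_0|<\kappa$ we may discard $B_0$ and adjoin it to $B_1'$ at the end without affecting $|B_1'|=\kappa$. Thus we are reduced to partitioning $B=\bigcup_{k\ge1}B_k$ into $(B_n')_{n\ge1}$ with $|B_n'|=\kappa$ and $B_n'\subseteq\bigcup_{k\ge n}B_k$, knowing that $\bigl|\bigcup_{k\ge n}B_k\bigr|=\kappa$ for all $n\ge1$.

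I construct the $B_n'$ recursively, maintaining the invariant that $R_n:=B\setminus(B_1'\cup\dots\cup B_n')$ is contained in $\bigcup_{k>n}B_k$ and satisfies $\bigl|R_n\cap\bigcup_{k\ge m}B_k\bigr|=\kappa$ for all $m>n$; for $n=0$ this is the hypothesis. At step $n$ the still unplaced part $B_n\cap R_{n-1}$ must go into $B_n'$: if it already has cardinality $\kappa$ set $B_n':=B_n\cap R_{n-1}$, and otherwise set $B_n':=(B_n\cap R_{n-1})\cup P$ where $P\subseteq R_{n-1}\cap\bigcup_{k>n}B_k$ has cardinality $\kappa$ and is chosen so that $\bigl|\bigl(R_{n-1}\cap\bigcup_{k\ge m}B_k\bigr)\setminus P\bigr|=\kappa$ for every $m>n$. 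Such a $P$ exists: if some $R_{n-1}\cap B_{k_0}$ with $k_0>n$ has cardinality $\kappa$, split it into two sets of cardinality $\kappa$ and take $P$ to be one of them (then the other keeps every pool containing $B_{k_0}$ large, and pools not containing $B_{k_0}$ are untouched); and if no such $k_0$ exists, then all $R_{n-1}\cap B_k$ with $k>n$ have cardinality $<\kappa$ while summing to $\kappa$, which forces $\operatorname{cf}\kappa=\aleph_0$, so we group the indices $k>n$ into consecutive finite blocks $I_1,I_2,\dots$ with $\sum_{k\in I_i}|R_{n-1}\cap B_k|\ge\kappa_i$ for a fixed sequence $\kappa_1<\kappa_2<\dots$ of cardinals with supremum $\kappa$, $2$-colour the blocks, and let $P$ be the union of the sets $R_{n-1}\cap B_k$ with $k$ in one colour class. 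In both cases the invariant is preserved, every $B_k$ is entirely placed by step $k$, and the $B_n'$ partition $B$.

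I expect the last subcase to be the main obstacle: when $\kappa$ is singular of cofinality $\aleph_0$ one cannot carve off a padding set of cardinality $\kappa$ by a single split without risking the depletion of all later pools, and the block-wise $2$-colouring against a cofinal sequence is precisely what repairs this. The remaining points — the cardinal-arithmetic steps (repeatedly using $a+b=\max(a,b)$ when $b$ is infinite), the disjointness of the $B_n'$, and the identity $\bigcup_n B_n'=B$ — are routine.
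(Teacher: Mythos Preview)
Your proof is correct, but it proceeds quite differently from the paper's. The paper works in two stages: first it enlarges the representing sequence so that every $\cH_n'$ has dimension at least $\aleph_0$ (splitting into the two cases $\dim\cR^\perp\ge\aleph_0$ and $\dim\cR^\perp<\aleph_0$, the second handled by extracting and redistributing countably many one-dimensional slivers from the infinitely many nonzero $\cH_k$); then, once every $\cH_n$ is infinite-dimensional, it performs a ``diagonal'' splitting $\cH_n=\bigoplus_{k=1}^n\cK_k^{(n)}$ and regroups along diagonals to obtain $\dim\cH_k'=\dim H$. Your argument instead goes straight for an orthonormal-basis partition via a single greedy recursion, isolating the only genuinely delicate situation---when $\kappa$ has countable cofinality and no single $B_{k_0}$ is large enough to pad from---and repairing it with the block $2$-colouring against a cofinal sequence. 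What the paper's route buys is that it never has to mention cofinality: once all $\cH_n$ are infinite-dimensional the diagonal trick works uniformly. What your route buys is a cleaner logical structure (one recursion rather than two separate enlargements) and the explicit observation that the full strength of~\eqref{eq:dim-seq-ineq} is not needed, only that every tail $\cR^\perp\oplus\bigoplus_{k\ge n}\cH_k$ has dimension $\dim H$.
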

\begin{proof}
We first show that there exists a dense operator range $\cR'$ that is represented by $(\cH_n')$ such that
$\cR\subset\cR'$, $\dim\bigoplus_{k=1}^n\cH_k'\le\dim\bigoplus_{k=n+1}^\infty\cH_k'$ and $\dim\cH_n'\ge\aleph_0$ for all
$n\in\NN$.
Since $H$ is infinite-dimensional, it follows from~\eqref{eq:dim-seq-ineq} that
$\dim(\cR^\perp \oplus\bigoplus_{k=n}^\infty\cH_k)\ge\aleph_0$ for all $n\in\NN$.
Hence $\dim\cR^\perp\ge\aleph_0$ or $\{k\in\NN: \cH_k\ne \{0\}\}$ is infinite.
We consider these two cases separately.

\textit{Case 1:} Suppose that $\dim\cR^\perp\ge\aleph_0$.\\
We can decompose $\cR^\perp=\bigoplus_{k=1}^\infty\cK_k$ such that $\dim\cK_n=\dim\cR^\perp$ for all $n\in\NN$.
Then the sequence $(\cH_n\oplus\cK_n)$ of subspaces is orthogonal and represents an operator range $\cR'$.
By Lemma~\ref{lem:or}\,\ref{en:lor-contain} one has $\cR\subset\cR'$.
Moreover, $\cR'$ is dense in $H$ and $\dim(\cH_n\oplus\cK_n)\ge\aleph_0$ for all $n\in\NN$.
Clearly
\[
    \dim\bigoplus_{k=1}^n(\cH_k\oplus\cK_k) \le \dim\bigoplus_{k=n+1}^\infty (\cH_k\oplus\cK_k)
\]
for all $n\in\NN$.

\textit{Case 2:} Suppose that $M := \{k\in\NN: \cH_k\ne \{0\}\}$ is infinite and $\dim\cR^\perp<\aleph_0$.\\
By replacing $\cH_1$ with $\cH_1\oplus\cR^\perp$ we may assume that
 $\cR$ is dense in $H$ and
\begin{equation}\label{eq:dim-ineq2}
    \dim \bigoplus_{k=1}^n\cH_k \le \dim\bigoplus_{k=n+1}^\infty \cH_k
\end{equation}
for all $n\in\NN$.
Write $M$ as the countable disjoint union of sets $(M_n)_{n\in\NN}$ such
that $\card M_n =\card M=\aleph_0$ for all $n\in\NN$.
Set $M_n' := M_n\setminus\{1,\dots,n-1\}$ for all $n\in\NN$.
Then $\card M_n'=\aleph_0$.
Set $M':=\bigcup_{n\in\NN}M_n'$.
For all $n\in M'$ let $\cK_n$ be a fixed one-dimensional subspace of $\cH_n$.
Set $\cK_n' := \bigoplus_{k\in M_n'} \cK_k$ for all $n\in\NN$. Then $\dim\cK_n'=\aleph_0$.
For all $n\in\NN$ define
\[
    \cH_n' := \begin{cases}
        (\cH_n\ominus\cK_n)\oplus\cK_n' & \text{if $n\in M'$,} \\
        \cH_n\oplus\cK_n' & \text{if $n\in\NN\setminus M'$.}
    \end{cases} 
\]
Then $\dim\cH_n'=\dim\cH_n + \aleph_0$ for all $n\in\NN$.
Clearly $\cH_n\subset\cH_n'\subset\bigoplus_{k=1}^n\cH_k'$ if $n\in\NN\setminus M'$.
On the other hand, if $n\in M'$, then there exists a (unique) $m\in\NN$ such that $n\in M_m'$.
Then $n\notin\{1,\dots,m-1\}$, and hence $n\ge m$. Therefore
$\cK_n\subset\cK_m'\subset\cH_m'\subset\bigoplus_{k=1}^n\cH_k'$. 
Thus also in this case $\cH_n\subset\bigoplus_{k=1}^n\cH_k'$.
The sequence of subspaces $(\cH_n')$ is orthogonal since the subspaces
\begin{alignat*}{2}
    \cH_n\ominus\cK_n &\quad\text{with $n\in M'$,}\\
    \cH_n &\quad\text{with $n\in\NN\setminus M'$ and}\\
    \cK_n &\quad\text{with $n\in M'$}
\end{alignat*}
are orthogonal, together with the disjointness of the $M_n'$.
It follows that the operator range $\cR'$ represented by $(\cH_n')$ contains $\cR$ by Lemma~\ref{lem:or}\,\ref{en:lor-contain}. Moreover, 
\[
    \dim\bigoplus_{k=1}^n\cH_k' = \aleph_0+\dim\bigoplus_{k=1}^n\cH_k
    \le \aleph_0 + \dim\bigoplus_{k=n+1}^\infty\cH_k = \dim\bigoplus_{k=n+1}^\infty\cH_k'
\]
for all $n\in\NN$ by~\eqref{eq:dim-ineq2}.

Thus we may assume that $\cR$ is a dense operator range with $\dim\cH_n\ge\aleph_0$ for all $n\in\NN$ after enlarging $\cR$ appropriately. We now continue the argument under this assumption.
Decompose $\cH_n=\bigoplus_{k=1}^n \cK^{(n)}_k$
such that $\dim\cK^{(n)}_k=\dim\cH_n$ for all $n\in\NN$ and $k\in\{1,\dots,n\}$.
Set $\cH'_k \coloneqq \bigoplus_{m=k}^\infty \cK^{(m)}_{k}$
for all $k\in\NN$.
Then 
\[
    \cH_n = \bigoplus_{k=1}^n\cK^{(n)}_k \subset\bigoplus_{m=1}^\infty\bigoplus_{k=1}^{\min\{n,m\}} \cK^{(m)}_k =
\bigoplus_{k=1}^n\bigoplus_{m=k}^\infty \cK^{(m)}_k = \bigoplus_{k=1}^n\cH_k'.
\]
So $(\cH'_n)$ represents an operator range $\cR'$ that contains $\cR$ by Lemma~\ref{lem:or}\,\ref{en:lor-contain}.
Moreover, 
\begin{align*}
    \dim H=\dim\bigoplus_{n=1}^\infty\cH_n=\dim\bigoplus_{n=k}^\infty\cH_n
    = \dim\bigoplus_{n=k}^\infty\cK^{(n)}_k = \dim\cH_k'
\end{align*}
for all $k\in\NN$, where we used~\eqref{eq:dim-seq-ineq} in the second step.
\end{proof}

We complete the proof of Proposition~\ref{prop:char}.
\begin{proof}[End of the proof of Proposition~\ref{prop:char}]
By Lemma~\ref{lem:enlarge} it suffices to find a unitary operator $U$ for an operator range $\cR$ that is represented by $(\cH_n)$ such
that $\kappa \coloneqq\dim H = \dim\cH_n$ for all $n\in\NN$.
Let $S$ be the Hilbert space direct sum of $\kappa$ disjoint copies of the operator $T$ from
Example~\ref{ex:T-nc-range}.
By Lemma~\ref{lem:or}\,\ref{en:lor-unequi} there exists a unitary operator $W$ such that $W\rg S = \cR$. 
Moreover, since Proposition~\ref{prop:israel} applies to $T$, there exists
a unitary operator $V$ such that $V\rg S\cap \rg S=\{0\}$.
Then $U=WVW^*$ satisfies
\[
    U\cR\cap \cR = WVW^*W\rg S \cap W\rg S = W(V\rg S\cap\rg S) = \{0\}.
\] 
The proof of Proposition~\ref{prop:char} is complete.
\end{proof}

The following corollary gives a sufficient condition for when two operator ranges $\cR$ and $\cS$ admit a unitary
operator $W$
such that $W\cR\cap\cS=\{0\}$. 
\begin{corollary}
Let $H$ be an infinite-dimensional Hilbert space.
Suppose that $\cR$ and $\cS$ are operator ranges such that there exist unitary operators $U$ and $V$ with
$U\cR\cap\cR=\{0\}$ and $V\cS\cap\cS=\{0\}$.
Then there exists a unitary operator $W$ such that $W\cR\cap\cS=\{0\}$.
\end{corollary}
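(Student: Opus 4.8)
The plan is to reduce both operator ranges to a common ``model'' operator range by enlarging them, and then to transport the disjointness statement supplied by Proposition~\ref{prop:israel} through two unitary equivalences.

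First I would unpack the two hypotheses. By Proposition~\ref{prop:char}, the existence of $U$ forces~\eqref{eq:dim-seq-ineq} to hold for every sequence representing $\cR$, and likewise the existence of $V$ forces~\eqref{eq:dim-seq-ineq} for every sequence representing $\cS$. Since $H$ is infinite-dimensional, Lemma~\ref{lem:enlarge} then applies to each of $\cR$ and $\cS$ and yields \emph{dense} operator ranges $\cR'\supseteq\cR$ and $\cS'\supseteq\cS$, represented by sequences all of whose subspaces have dimension $\kappa\coloneqq\dim H$. As $W\cR\cap\cS\subseteq W\cR'\cap\cS'$ for every unitary operator $W$, it suffices to produce a unitary operator $W$ with $W\cR'\cap\cS'=\{0\}$; from here on I would work only with $\cR'$ and $\cS'$.

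Next I would bring in the model range from the end of the proof of Proposition~\ref{prop:char}: let $S$ be the Hilbert space direct sum of $\kappa$ copies of the operator $T$ of Example~\ref{ex:T-nc-range}, so that $\rg S$ is a dense operator range represented by a sequence of subspaces each of dimension $\kappa$, and, exactly as in that proof (apply Proposition~\ref{prop:israel} to the compact operator $T$ and then take the corresponding direct sum of $\kappa$ copies of the resulting unitary on $\ell^2(\NN)$), there is a unitary operator $V$ with $V\rg S\cap\rg S=\{0\}$. Since $\cR'$, $\cS'$ and $\rg S$ are all dense operator ranges whose representing subspaces have dimension $\kappa$, Lemma~\ref{lem:or}\,\ref{en:lor-unequi} provides unitary operators $W_1,W_2$ with $W_1\rg S=\cR'$ and $W_2\rg S=\cS'$.

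Finally I would conjugate: put $W\coloneqq W_2VW_1^{*}$, so that $W\cR'=W_2V\rg S$ and hence
\[
    W\cR'\cap\cS' = W_2V\rg S\cap W_2\rg S = W_2\bigl(V\rg S\cap\rg S\bigr) = \{0\},
\]
which gives $W\cR\cap\cS\subseteq W\cR'\cap\cS'=\{0\}$, as required. The genuine work here is really Lemma~\ref{lem:enlarge}, which lets us discard the fine structure of $\cR$ and $\cS$ and keep only density and the common dimension $\kappa$; since that lemma (and the construction of $V$) is already in hand, the only new ingredient is the conjugation trick above, and the only point meriting a moment's care is the verification that the direct sum of copies of the single-block unitary from Proposition~\ref{prop:israel} still satisfies $V\rg S\cap\rg S=\{0\}$, which holds because membership in both $\rg S$ and $V\rg S$ can be tested block by block.
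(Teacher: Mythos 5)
Your proof is correct and follows essentially the same route as the paper: enlarge both ranges via Lemma~\ref{lem:enlarge} to dense ranges represented by subspaces of dimension $\dim H$, identify them unitarily via Lemma~\ref{lem:or}\,\ref{en:lor-unequi}, and conjugate a disjointing unitary across. The paper simply invokes Proposition~\ref{prop:char} to supply the disjointing unitary for the enlarged range, whereas you rebuild it from the model range of Example~\ref{ex:T-nc-range} and Proposition~\ref{prop:israel}; that is the same construction unwound.
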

\begin{proof}
By Proposition~\ref{prop:char} and Lemma~\ref{lem:enlarge} we may suppose without loss of generality,
possibly enlarging $\cR$ and $\cS$, that
$\cR$ is represented by $(\cH_n)$, the operator range $\cS$ is represented by $(\cK_n)$ and $\dim\cH_n=\dim\cK_n=\dim H$ for all
$n\in\NN$. Then by Lemma~\ref{lem:or}\,\ref{en:lor-unequi} there exists a unitary operator $Z$ such that $Z\cR =\cS$.
So the unitary operator $W := VZ$ has the desired property.
\end{proof}

The following is inspired by the proof of von Neumann's theorem as presented in~\cite[Theorem~3.6]{FW71:op-rg}.
Note that the next proposition is also applicable to the counterexample in Example~\ref{ex:ce-vNeu-dom}.
\begin{proposition}
Let $H$ be an infinite-dimensional Hilbert space.
Suppose that $\cR$ is a dense operator range in $H$.
Then there exists an operator range $\cS\subset\cR$ and a unitary operator $U$
such that $\cS$ is dense in $H$ and $U\cS\cap\cS=\{0\}$.
\end{proposition}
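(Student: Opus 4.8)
The plan is to reduce to a concrete model via Lemma~\ref{lem:opran-seq}. Since $\cR$ is dense, we may pick a sequence $(\cH_n)$ representing $\cR$ with $\bigoplus_{k=1}^\infty\cH_k=H$, so in particular $\sum_{n=1}^\infty\dim\cH_n=\dim H=:\kappa\ge\aleph_0$. The key observation is that although $\cR$ itself need not satisfy the dimension inequality~\eqref{eq:dim-seq-ineq} (Example~\ref{ex:ce-vNeu-dom} is exactly such a case), we have the freedom to pass to a \emph{smaller} dense operator range $\cS\subset\cR$ by pushing parts of $\cH_n$ into later spaces (cf.\ the remark after Lemma~\ref{lem:or}), and this can only help: moving mass later makes the right-hand side of~\eqref{eq:dim-seq-ineq} larger in the tail and the left-hand side smaller early on, while density is preserved as long as we never discard any vectors, only relocate them.

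Concretely, I would construct the representing sequence $(\cK_n)$ of $\cS$ as follows. For each $n$, split $\cH_n$ into a Hilbert-space orthogonal decomposition $\cH_n=\bigoplus_{j\ge n}\cH_n^{(j)}$ into infinitely many pieces (possibly some zero, but arranged so that enough are nonzero), and set $\cK_j:=\bigoplus_{n\le j}\cH_n^{(j)}$. Then $\cK_j\subset\bigoplus_{k=1}^j\cH_k$ for all $j$, so $\cS\subset\cR$ by Lemma~\ref{lem:or}\,\ref{en:lor-contain}; and since the $\cH_n^{(j)}$ exhaust all of $\bigoplus_n\cH_n=H$, the sequence $(\cK_j)$ still spans $H$, whence $\cS$ is dense by Lemma~\ref{lem:or}\,\ref{en:lor-dense}. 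The splitting should be chosen so that $\dim\cK_j=\kappa$ for every $j$ (this uses $\kappa\ge\aleph_0$: distribute each $\cH_n$ so that each $\cK_j$ receives a piece of dimension $\dim\cH_n$ from every $n\le j$, possible since $\sum_n\dim\cH_n=\kappa$ forces $\dim\cH_n=\kappa$ for some $n$, or infinitely many nonzero terms; in the first case split that single $\cH_n$ into $\aleph_0$ pieces each of dimension $\kappa$, in the second case use the infinitely many nonzero pieces directly — this mirrors the two-case analysis in Lemma~\ref{lem:enlarge}). With $\dim\cK_j=\kappa=\dim H$ for all $j$ and $\cS$ dense, condition~\eqref{eq:dim-seq-ineq} holds trivially (both sides equal $\kappa$), so by Proposition~\ref{prop:char} there is a unitary $U$ with $U\cS\cap\cS=\{0\}$.

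The main obstacle I anticipate is organising the combinatorial splitting cleanly when $\kappa>\aleph_0$ while simultaneously ensuring density is not lost — one must be careful that the $\cH_n^{(j)}$ genuinely form an orthogonal decomposition of each $\cH_n$ and that their union across all $n,j$ recovers a spanning family. This is essentially the same bookkeeping as in the two cases of Lemma~\ref{lem:enlarge}, run "in reverse" (shrinking rather than enlarging), so I expect it to go through with minor modifications; indeed one could likely phrase the construction so as to invoke the internal mechanics of Lemma~\ref{lem:enlarge} almost verbatim, applied after first collapsing $(\cH_n)$ to make every space infinite-dimensional. Once $\dim\cK_j=\dim H$ is achieved for all $j$, the rest is immediate from Proposition~\ref{prop:char}.
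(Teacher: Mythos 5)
Your overall strategy --- redistribute pieces of the $\cH_n$ diagonally into later slots $\cK_j$ so that $\cS\subset\cR$ by Lemma~\ref{lem:or}\,\ref{en:lor-contain} while density is preserved, then invoke Proposition~\ref{prop:char} --- is exactly the paper's. However, the endpoint you aim for, namely $\dim\cK_j=\dim H$ for \emph{every} $j$, is not achievable in general, and your argument leans on it. Since the containment criterion forces $\cK_1\subset\cH_1$, one always has $\dim\cK_1\le\dim\cH_1$, and no admissible preprocessing fixes this: only regroupings of finitely many initial terms preserve the represented operator range, so $\dim\cH_1$ can remain strictly below $\dim H$ after every such regrouping. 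Two concrete failures: in the separable case with $\dim\cH_n=1$ for all $n$ one gets $\dim\cK_1<\aleph_0=\dim H$; and with $\dim\cH_n=\aleph_n$ one has $\dim H=\aleph_\omega$ while every finite initial block has dimension $\aleph_{n_0}<\aleph_\omega$. Your case split (``some $\cH_n$ has dimension $\kappa$, or there are infinitely many nonzero terms'') also overlooks that in the second alternative the supremum $\kappa$ need not be attained, and each $\cK_j$ still only receives contributions from $\cH_1,\dots,\cH_j$.

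The repair is small and is what the paper does: one does not need $\dim\cK_j=\dim H$, only the inequality~\eqref{eq:dim-seq-ineq}, and for a dense $\cS$ this already follows from the weaker property that $(\dim\cK_j)$ is nondecreasing with infinitely many nonzero terms (then $\dim\bigoplus_{k=1}^n\cK_k$ equals $\dim\cK_n$ or is finite, while the tail has dimension at least $\dim\cK_{n+1}$, respectively at least $\aleph_0$). If all $\cH_n$ are finite-dimensional, $\cR$ itself satisfies~\eqref{eq:dim-seq-ineq} and no shrinking is needed. Otherwise, regroup a finite initial block so that $\dim\cH_1\ge\aleph_0$, split $\cH_1$ into countably many pieces of dimension $\dim\cH_1$ and distribute them forward to make every slot infinite-dimensional; then your diagonal construction, with each $\cH_n$ split into countably many pieces of full dimension $\dim\cH_n$, automatically yields nondecreasing $\dim\cK_j$. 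With that substitution your proof goes through.
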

\begin{proof}
\textit{Case 1:} Suppose that $\cR$ is represented by $(\cH_n)$ satisfying $\dim\cH_n<\aleph_0$ for all $n\in\NN$.\\
Then Proposition~\ref{prop:char} directly applies to $\cR$.

\textit{Case 2:} Suppose that $\cR$ is represented by $(\cH_n)$ and that there exists an $n_0\in\NN$ such that
$\dim\cH_{n_0}\ge\aleph_0$.\\
Clearly $\cR$ is equal to the operator range represented by the orthogonal decomposition
$\bigl(\bigoplus_{k=1}^{n_0}\cH_k, \cH_{n_0+1}, \cH_{n_0+2},\dots\bigr)$.
Hence we may assume that $\dim\cH_1\ge\aleph_0$.
Let $(\cK_n)$ be an orthogonal decomposition of $H_1$ such that $\dim\cK_n=\dim\cH_1$ for all $n\in\NN$.
Then the operator range $\cR'$ represented by the orthogonal decomposition $(\cK_1,\cH_2\oplus \cK_2,\cH_3\oplus\cK_3,\ldots)$
is dense in $H$ and satisfies $\cR'\subset\cR$ by Lemma~\ref{lem:or}\,\ref{en:lor-dense} and~\ref{en:lor-contain}.
Therefore we may assume that the orthogonal sequence $(\cH_n)$ representing $\cR$ satisfies $\dim\cH_n\ge\aleph_0$ for all $n\in\NN$.

For all $n\in\NN$ we can decompose $\cH_n$ into a countable orthogonal direct sum of $(\cH^{(n)}_k)_{k\in\NN}$
such that $\dim\cH^{(n)}_k=\dim\cH_n$ for all $k\in\NN$.
Define $\cK_n \coloneqq \bigoplus_{k=1}^n\cH^{(k)}_{n-k+1}$ for all $n\in\NN$.
Then the operator range $\cS$ represented by $(\cK_n)$ is dense by
Lemma~\ref{lem:or}\,\ref{en:lor-dense} and satisfies
$\cS\subset\cR$ by Lemma~\ref{lem:or}\,\ref{en:lor-contain}. Moreover, $\dim\cK_n\le\dim\cK_{n+1}$ for all $n\in\NN$.
So there exists a unitary operator $U$ such that $U\cS\cap\cS=\{0\}$ by
Proposition~\ref{prop:char}\,\ref{en:lc-one-rep}$\Rightarrow$\ref{en:lc-U}.
\end{proof}

While the more technical Proposition~\ref{prop:char} is already useful by itself,
we use it to provide the following reformulation of von Neumann's theorem that holds for general Hilbert spaces.
\begin{theorem}\label{thm:vNeu-nonsep}
Let $\cR$ be an operator range in $H$. 
The following are equivalent:
\begin{romanenum}
\item 
There exists a unitary operator $U$ such that $U\cR\cap \cR=\{0\}$.
\item\label{en:cond-K}
For every closed subspace $K\subset \cR$ one has
\[
    \dim K \le \dim K^\perp.
\]
\end{romanenum}
\end{theorem}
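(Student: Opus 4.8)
The plan is to derive this directly from Proposition~\ref{prop:char}, which already characterises condition~(i) in terms of a dimension inequality for a sequence $(\cH_n)$ representing $\cR$. So it suffices to prove that condition~\ref{en:cond-K} is equivalent to the assertion that~\eqref{eq:dim-seq-ineq} holds for a (equivalently, every) representing sequence of $\cR$.

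For `\ref{en:cond-K}$\Rightarrow$(i)' I would fix a sequence $(\cH_n)$ representing $\cR$ and apply the hypothesis to the specific closed subspaces $K_n := \bigoplus_{k=1}^n\cH_k$, $n\in\NN$. Two things need checking. First, $K_n$ is genuinely a closed subspace of $\cR$: it is an orthogonal sum of finitely many closed subspaces, and any element $x_1+\dots+x_n$ with $x_k\in\cH_k$ trivially satisfies $\sum_{k=1}^n 4^k\norm{x_k}^2<\infty$, so $K_n\subset\cR$ by Lemma~\ref{lem:opran-seq}. Second, $K_n^\perp=\cR^\perp\oplus\bigoplus_{k=n+1}^\infty\cH_k$, which is exactly the last identity in Lemma~\ref{lem:or}\,\ref{en:lor-dense}. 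Applying~\ref{en:cond-K} to $K_n$ then yields precisely~\eqref{eq:dim-seq-ineq} for every $n$, and Proposition~\ref{prop:char} gives~(i).

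For the converse `(i)$\Rightarrow$\ref{en:cond-K}' I would argue directly, without a representing sequence. Let $U$ be a unitary operator with $U\cR\cap\cR=\{0\}$ and let $K\subset\cR$ be a closed subspace. Then $UK$ is closed, $\dim UK=\dim K$, and $UK\cap K\subset U\cR\cap\cR=\{0\}$. Let $P$ be the orthogonal projection of $H$ onto $K^\perp$. The restriction $P|_{UK}\colon UK\to K^\perp$ is a bounded operator whose kernel is $UK\cap K=\{0\}$, hence it is injective. By Lemma~\ref{lem:cmp-dim}, applied with $UK$ in the role of $H$ and $K^\perp$ in the role of the target space, one obtains $\dim K=\dim UK=\dim\clos{\rg(P|_{UK})}\le\dim K^\perp$, which is~\ref{en:cond-K}.

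The argument is short once Proposition~\ref{prop:char} is available; the only step that is not purely formal is the second direction, where one must pass from the mere trivial-intersection property $UK\cap K=\{0\}$ to an honest cardinality inequality — and this is exactly where the injectivity half of Lemma~\ref{lem:cmp-dim}, rather than its first (``wrong-direction'') half, does the work. No separability or finite-dimensionality hypothesis enters anywhere, so the characterisation holds for general Hilbert spaces; in particular it explains the failure in Example~\ref{ex:ce-vNeu-dom}, where $\cR$ contains a closed subspace $K$ with $\dim K>\dim K^\perp$.
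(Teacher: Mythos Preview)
Your proof is correct. The direction \ref{en:cond-K}$\Rightarrow$(i) is essentially the paper's argument (the paper phrases it as a contraposition, but the content---testing~\eqref{eq:dim-seq-ineq} against the closed subspaces $K=\bigoplus_{k=1}^{n}\cH_k$---is identical).

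For (i)$\Rightarrow$\ref{en:cond-K} you take a genuinely different, and more elementary, route. The paper builds a representing sequence for $\cR$ that starts with $\cH_1=K$: it writes $\cR=K\oplus(\cR\cap K^\perp)$, uses Proposition~\ref{prop:opran}\,\ref{en:or-is} to say $\cR\cap K^\perp$ is an operator range, picks a representing sequence $(\cK_n)$ for it, and then invokes Proposition~\ref{prop:char}\,\ref{en:lc-U}$\Rightarrow$\ref{en:lc-all-rep} with $n=1$. Your argument bypasses all of this: from $UK\cap K=\{0\}$ you get that the orthogonal projection $P\colon UK\to K^\perp$ is injective, and Lemma~\ref{lem:cmp-dim} immediately yields $\dim K=\dim UK\le\dim K^\perp$. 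This avoids the operator-range machinery (no need for Proposition~\ref{prop:opran}\,\ref{en:or-is}, no need to verify that the new sequence represents $\cR$), and in fact your argument for this implication uses nothing about $\cR$ being an operator range at all---it works for any linear subspace. The paper's approach has the modest aesthetic advantage of deriving both implications uniformly from Proposition~\ref{prop:char}; yours is shorter and reveals that (i)$\Rightarrow$\ref{en:cond-K} is the soft direction.
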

\begin{proof}
Suppose that there exists a unitary operator $U$ such that $U\cR\cap\cR=\{0\}$.
Let $K\subset\cR$ be a closed subspace. Note that $\cR = K\oplus (\cR\cap K^\perp)$
and that $\cR\cap K^\perp$ is an operator range by Proposition~\ref{prop:opran}\,\ref{en:or-is}.
Suppose that $(\cK_n)$ represents $\cR\cap K^\perp$. 
Define $\cH_1:=K$, $\cH_2:=\cK_1\oplus\cK_2$ and $\cH_n := \cK_n$ for all $n\ge 3$.
It is readily verified that $(\cH_n)$ represents $\cR$.
Then by Proposition~\ref{prop:char}\,\ref{en:lc-U}$\Rightarrow$\ref{en:lc-all-rep} with $n=1$ and Lemma~\ref{lem:or}\,\ref{en:lor-dense} one obtains $\dim K\le\dim K^\perp$.

For the converse direction, we give a proof by contraposition.
Let $(\cH_n)$ represent $\cR$
and suppose that there does not exist a unitary operator $U$ such that $U\cR\cap\cR=\{0\}$.
By Proposition~\ref{prop:char} there exists an $n_0\in\NN$ such that
\[
    \dim\bigoplus_{k=1}^{n_0}\cH_k > \dim\Bigl(\cR^\perp\oplus\bigoplus_{k=n_0+1}^\infty\cH_k\Bigr).
\]
Set $K := \bigoplus_{k=1}^{n_0}\cH_k$. Then $K$ is closed, $K\subset\cR$ and $\dim K > \dim K^\perp$.
\end{proof}

\begin{remark}
If $H$ is infinite-dimensional, then Condition~\ref{en:cond-K} in Theorem~\ref{thm:vNeu-nonsep} is equivalent
to requiring $\dim K^\perp=\dim H$ for every closed subspace $K\subset\cR$.

Theorem~\ref{thm:vNeu-nonsep} extends the separable case covered in Theorem~\ref{thm:vNeu-op-ran}.
In fact, suppose that $\dim H=\aleph_0$ and $K\subset\cR$ is closed with $\dim K^\perp<\aleph_0$.
Then $\cR\cap K^\perp$ is closed and therefore $\cR=K\oplus (\cR\cap K^\perp)$ is closed.
Consequently Condition~\ref{en:cond-K} is clearly satisfied 
for a nonclosed operator range in the separable Hilbert space $H$.

We point out that it is allowed in Theorem~\ref{thm:vNeu-nonsep} that $H$ is finite-dimensional, that $\cR$ is closed
and that $\cR$ is not dense.
\end{remark}

The following is a straightforward reformulation for domains of closed operators.
\begin{corollary}
Let $H$ be a Hilbert space and $A$ be a densely defined closed operator in $H$.
Suppose that $\dim K\le\dim K^\perp$ for every closed subspace $K\subset D(A)$.
Then there exists a unitary operator $U$ such that $D(U^*AU)\cap D(A)=\{0\}$.
\end{corollary}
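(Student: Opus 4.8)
The plan is to reduce the statement to Theorem~\ref{thm:vNeu-nonsep} applied to the operator range $\cR\coloneqq D(A)$, so that almost all of the work has already been done. First I would observe that since $A$ is a densely defined closed operator in $H$, its domain $D(A)$, equipped with the graph norm, is a Hilbert space that embeds continuously into $H$; by Proposition~\ref{prop:opran}\,\ref{en:or-hi} this means $D(A)$ is an operator range in $H$. (Alternatively one can note that if $\rho(A)\ne\emptyset$, then $D(A)=\rg(\lambda I-A)^{-1}$ directly, but the graph-norm argument is cleaner and avoids assuming nonemptiness of the resolvent set.)

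Next I would invoke the hypothesis: every closed subspace $K\subset D(A)$ satisfies $\dim K\le\dim K^\perp$. This is precisely Condition~\ref{en:cond-K} of Theorem~\ref{thm:vNeu-nonsep} for the operator range $\cR=D(A)$. Hence by Theorem~\ref{thm:vNeu-nonsep} there exists a unitary operator $U$ on $H$ with $U\cR\cap\cR=\{0\}$, i.e.\ $U\,D(A)\cap D(A)=\{0\}$.

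Finally I would translate this back into a statement about domains. For a unitary operator $U$ one has $D(U^*AU)=U^*D(A)=U^{-1}D(A)$, because $U^*AU$ is defined precisely on those $x$ with $Ux\in D(A)$. Therefore
\[
    D(U^*AU)\cap D(A) = U^*D(A)\cap D(A).
\]
Applying $U$ to this intersection gives $U\bigl(U^*D(A)\cap D(A)\bigr)=D(A)\cap U\,D(A)=\{0\}$, and since $U$ is injective the intersection $U^*D(A)\cap D(A)$ is itself $\{0\}$. This yields $D(U^*AU)\cap D(A)=\{0\}$, as required.

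The only mild subtlety — and the one point worth spelling out carefully rather than the main obstacle — is the verification that $D(A)$ is genuinely an operator range; once that is in place the rest is a direct citation of Theorem~\ref{thm:vNeu-nonsep} and the elementary identity $D(U^*AU)=U^*D(A)$. Everything is routine and I do not anticipate a real difficulty; the corollary is essentially a repackaging of the main theorem in the language of closed operators, exactly parallel to the reformulation Theorem~\ref{thm:vNeu-dom}\,$\Leftrightarrow$\,Theorem~\ref{thm:vNeu-op-ran} in the separable setting.
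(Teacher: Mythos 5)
Your proposal is correct and is exactly the argument the paper intends: the authors omit the proof, calling the corollary a ``straightforward reformulation'' of Theorem~\ref{thm:vNeu-nonsep}, and your route --- $D(A)$ is an operator range via the graph norm and Proposition~\ref{prop:opran}\,\ref{en:or-hi}, the hypothesis is Condition~\ref{en:cond-K}, and $D(U^*AU)=U^*D(A)$ --- is the intended one. No gaps.
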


Analogously to~\cite[Corollary~1 of Theorem~3.6]{FW71:op-rg},
we are able to obtain the following corollary by inspection of the proof of Proposition~\ref{prop:char}.
\begin{corollary}\label{cor:group}
Let $\cR$ be an operator range in $H$.
Suppose that there exists a unitary operator $U$ such that $U\cR\cap\cR=\{0\}$.
Then there exists a uniformly continuous unitary group $(U_t)_{t\in\RR}$ and an uncountable interval $I\subset\RR$ such that
$U_t\cR\cap U_s\cR=\{0\}$ for all $t,s\in I$ with $t\ne s$.
\end{corollary}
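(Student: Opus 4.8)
The plan is to rerun the proof of Proposition~\ref{prop:char}, replacing the single unitary obtained there by a uniformly continuous unitary group; the only genuinely new input is needed in a separable model space. We treat the cases $\dim H\ge\aleph_0$ and $\dim H<\aleph_0$ separately.

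Suppose first that $\dim H\ge\aleph_0$. By hypothesis Proposition~\ref{prop:char}\,\ref{en:lc-U} holds, so by Proposition~\ref{prop:char} the inequality~\eqref{eq:dim-seq-ineq} holds for some sequence representing $\cR$; Lemma~\ref{lem:enlarge} then yields a dense operator range $\cR'\supseteq\cR$ that is represented by a sequence $(\cH_n')$ with $\dim\cH_n'=\dim H$ for all $n\in\NN$. Since $U_t\cR\cap U_s\cR\subseteq U_t\cR'\cap U_s\cR'$, it suffices to establish the claim for $\cR'$. Exactly as in the end of the proof of Proposition~\ref{prop:char}, let $S$ be the Hilbert space direct sum of $\kappa\coloneqq\dim H$ copies of the operator $T$ from Example~\ref{ex:T-nc-range}, acting on $\widehat H\coloneqq\bigoplus_{i\in\kappa}\ell_2(\NN)$; by Lemma~\ref{lem:or}\,\ref{en:lor-unequi} there is a unitary operator $W$ with $W\rg S=\cR'$. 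Hence it is enough to produce a uniformly continuous unitary group $(V_t)_{t\in\RR}$ on $\widehat H$ and an uncountable interval $I\subseteq\RR$ with $V_t\rg S\cap V_s\rg S=\{0\}$ for all $t\ne s$ in $I$, because then $U_t\coloneqq WV_tW^*$ is as required.

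To obtain $(V_t)$, recall that $\rg S=\{(y_i)_{i\in\kappa}:y_i\in\rg T\text{ for all }i\text{ and }\sum_i\norm{T^{-1}y_i}^2<\infty\}$. In the separable space $\ell_2(\NN)$, a uniformly continuous unitary group $(V_t^0)_{t\in\RR}$ together with an uncountable interval $I$ such that $V_t^0\rg T\cap V_s^0\rg T=\{0\}$ for all $t\ne s$ in $I$ is provided by~\cite[Corollary~1 of Theorem~3.6]{FW71:op-rg}; alternatively one reproves this by adapting the rotation argument in the proof of Proposition~\ref{prop:israel}, fixing the covering and letting the rotation angle vary. Set $V_t\coloneqq\bigoplus_{i\in\kappa}V_t^0$. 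Then $(V_t)$ is a unitary group with $\norm{V_t-V_s}=\norm{V_t^0-V_s^0}$, so it is uniformly continuous. If $x=(x_i)_{i\in\kappa}\in V_t\rg S\cap V_s\rg S$, then by the description of $\rg S$ one has $(V_t^0)^{-1}x_i\in\rg T$ and $(V_s^0)^{-1}x_i\in\rg T$, whence $x_i\in V_t^0\rg T\cap V_s^0\rg T=\{0\}$ for every $i$ and thus $x=0$. This proves the claim for $\cR'$, and $\cR\subseteq\cR'$ finishes this case. The separable statement just used is the only non-routine ingredient of the whole argument, and is what I expect to be the main obstacle: everything else is the bookkeeping and reductions already present in the proof of Proposition~\ref{prop:char}.

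Finally, suppose $\dim H<\aleph_0$. Then $\cR$ is closed, and applying Theorem~\ref{thm:vNeu-nonsep} with $K=\cR$ shows that the hypothesis forces $\dim\cR\le\dim\cR^\perp$. Fix an orthonormal basis $(f_j)_{j\in J}$ of $\cR$ and an isometry $\iota\colon\cR\to\cR^\perp$, and let $B$ be the bounded self-adjoint operator on $H$ that vanishes on $(\cR\oplus\iota\cR)^\perp$ and acts on each of the pairwise orthogonal planes $\linspan\{f_j,\iota f_j\}$ as the generator of the rotation carrying $f_j$ to $\iota f_j$ at unit speed. For $U_t\coloneqq e^{itB}$ the subspace $U_t\cR$ is the orthogonal direct sum of the lines $U_t(\CC f_j)\subseteq\linspan\{f_j,\iota f_j\}$, so if $x\in U_t\cR\cap U_s\cR$ then for each $j$ the component of $x$ in $\linspan\{f_j,\iota f_j\}$ lies in $U_t(\CC f_j)\cap U_s(\CC f_j)$, two lines in a common plane that coincide only when $t\equiv s\pmod\pi$. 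Hence $U_t\cR\cap U_s\cR=\{0\}$ whenever $0<\abs{t-s}<\pi$, and $I\coloneqq[0,\tfrac\pi2]$ has the desired property.
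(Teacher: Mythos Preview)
Your proof is correct and follows essentially the same approach as the paper: both split into the infinite- and finite-dimensional cases, reduce the former via Lemma~\ref{lem:enlarge} and the direct-sum construction to the separable result in \cite[Corollary~1 of Theorem~3.6]{FW71:op-rg}, and handle the latter by simultaneous rotations in orthogonal planes spanned by a basis of $\cR$ and an orthonormal system in $\cR^\perp$. The only cosmetic differences are that you invoke Theorem~\ref{thm:vNeu-nonsep} to obtain $\dim\cR\le\dim\cR^\perp$ (which also follows directly from $U\cR\cap\cR=\{0\}$ in finite dimensions), and you take $I=[0,\tfrac{\pi}{2}]$ where the paper uses the larger $I=[0,\pi)$.
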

\begin{proof}
As in the proof of Proposition~\ref{prop:char}\,\ref{en:lc-one-rep}$\Rightarrow$\ref{en:lc-U} we distinguish two cases.

\textit{Case 1:} Suppose that $H$ is infinite-dimensional.\\
If $T$ is the operator as in Example~\ref{ex:T-nc-range}, then one can argue as in~\cite[Corollary~1 of
Theorem~3.6]{FW71:op-rg}.
So there exists a uniformly continuous group $(V_t)_{t\in\RR}$ such that $V_t\rg T \cap V_s\rg T=\{0\}$
for all $t,s\in\RR$ with $t\ne s$.
Then the claim for the first case follows by taking a direct sum of the appropriate cardinality as in the last part of the proof of
Proposition~\ref{prop:char}.

\textit{Case 2:} Suppose that $H$ is finite-dimensional.\\
Then $\cR$ is closed and $\dim\cR\le\dim\cR^\perp$.
Let $p_1,\dots,p_n$ be an orthonormal basis of $\cR$ and
$q_1,\dots,q_n$ an orthonormal system in $\cR^\perp$.
Define $U_t\in\cU$ such that $U_t$ is the 
identity on $\linspan\{p_1,\dots,p_n,q_1,\dots,q_n\}^\perp$ and $U_t$ is the 
rotation of $p_j$ towards $q_j$ by the angle $t$ in the 
two-dimensional space $\linspan\{p_j,q_j\}$ for all 
$j\in\{1,\dots,n\}$.
Then $(U_t)_{t\in\RR}$ defines a uniformly continuous group of unitary operators on $H$.
Moreover, the claim is satisfied with $I=[0,\pi)$.
\end{proof}
\begin{remark}\label{rem:fdim-fam}
It follows from the proof of Corollary~\ref{cor:group} that one can actually choose $I=\RR$ if $H$ is infinite-dimensional.
In general this is not possible in the finite-dimensional case. In fact, suppose $H=\CC^2$ and let $(U_t)_{t\in\RR}$ be
a uniformly continuous unitary group. By Stone's theorem and after a unitary transformation we may assume that
there exist $\lambda_1,\lambda_2\in\RR$ such that
\[
    U_t=\begin{pmatrix} e^{it\lambda_1} & 0 \\ 0 & e^{it\lambda_2} \end{pmatrix}
\]
for all $t\in\RR$.
Let $t\in\RR\setminus\{0\}$ be such that $t(\lambda_1-\lambda_2)\in 2\pi\ZZ$.
Then $U_t = e^{it\lambda_2} U_0$.

We point out, however, that in the finite-dimensional case we can find a continuous family of \emph{unitary and
self-adjoint} operators
$(U_t)_{t\in I}$ such that $U_t\cR\cap U_s\cR=\{0\}$ for all $t,s\in I$ with $t\ne s$. 
In fact, it suffices to consider the case $H=\CC^2$ and $\cR=\linspan\{e_1\}$.
Then the unitary operators
\[
    U_t := \begin{pmatrix} \cos t & -\sin t\\ \sin t & \cos t\end{pmatrix}
    \begin{pmatrix} 1 & 0 \\ 0 & -1 \end{pmatrix}
    \begin{pmatrix} \cos t & \sin t \\ -\sin t & \cos t\end{pmatrix}
\]
for all $t\in I :=[0,\frac{\pi}{2})$ have the desired properties.
\end{remark}

\section{Stability and density}\label{sec:stab}

We use Theorem~\ref{thm:vNeu-nonsep} to prove the following stability result.
\begin{theorem}\label{thm:clos}
Let $H$ be a Hilbert space. Then the set of operators $T\in\Linop(H)$ that admit a unitary operator $U$ 
such that $U\rg T\cap\rg T=\{0\}$ is closed with respect to the uniform operator norm.
\end{theorem}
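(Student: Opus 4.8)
The plan is to use the characterisation from Theorem~\ref{thm:vNeu-nonsep}: an operator $T\in\Linop(H)$ admits a unitary $U$ with $U\rg T\cap\rg T=\{0\}$ if and only if $\dim K\le\dim K^\perp$ for every closed subspace $K\subset\rg T$. So let $(T_m)$ be a sequence of operators with this property converging uniformly to $T\in\Linop(H)$, and suppose for contradiction that $T$ fails the property. By Theorem~\ref{thm:vNeu-nonsep} there is a closed subspace $K\subset\rg T$ with $\dim K>\dim K^\perp$. The goal is to transport this ``bad'' closed subspace into $\rg T_m$ for large $m$, contradicting that $T_m$ has the property.

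The key tool will be Douglas' lemma (used in the proof of Proposition~\ref{prop:opran}): if $\norm{T_m-T}$ is small, then $\rg T$ and $\rg T_m$ are comparable. More precisely, I would first reduce to the case $T\ge0$: replacing $T$ by $(TT^*)^{1/2}$ changes neither $\rg T$ (Proposition~\ref{prop:opran}\,\ref{en:or-psa}) nor, by continuity of $S\mapsto(SS^*)^{1/2}$, the convergence; and the approximants $(T_mT_m^*)^{1/2}$ still have the disjointness property since they have the same range as $T_m$. So assume $T\ge0$ and $T_m\ge0$. Then $K\subset\rg T\subset\clos{\rg T}=(\ker T)^\perp$. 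For $\eps>0$ with $\norm{T_m-T}<\eps$ one has $T^2\le (T_m^2+\eps^2 I)$ (from $\norm{Tx}\le\norm{T_mx}+\eps\norm{x}$ and an elementary estimate), hence by Douglas' lemma $\rg T=\rg(T^2)^{1/2}\subset\rg(T_m^2+\eps^2 I)^{1/2}=\rg T_m+\rg(\eps I)=\rg T_m$. Wait --- that gives $\rg T\subset\rg T_m$ only when $T_m$ is already surjective, which is not the situation; instead the correct extraction is that a closed subspace sitting \emph{strictly inside} $\rg T$ can be pushed into $\rg T_m$ after a small perturbation. Concretely, using the operator-range norm on $\rg T$ (Proposition~\ref{prop:opran}\,\ref{en:or-hi}) and the fact that $T$ restricted to $(\ker T)^\perp$ is an isomorphism onto $\rg T$ in that norm, a closed $K\subset\rg T$ is the continuous image of a closed subspace of $H$; one then shows $P_{\clos{\rg T_m}}$ restricted to $K$ is, for $m$ large, an injective operator with closed range $K_m\subset\rg T_m$ satisfying $\dim K_m=\dim K$ and $\dim K_m^\perp\le\dim K^\perp$ (since $\clos{\rg T_m}\supset$ a large subspace as $\ker T_m\to\ker T$), contradicting the property for $T_m$.

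The main obstacle --- and where I expect the real work to be --- is making the phrase ``push $K$ into $\rg T_m$ while controlling dimensions'' precise, since operator ranges are not closed and small perturbations can change kernels drastically in the strong sense even while $\dim\ker$ behaves semicontinuously. The clean route is probably: pick a closed $K\subset\rg T$ with $\dim K>\dim K^\perp$; write $K=\rg(T|_{M})$ for a suitable closed $M\subset(\ker T)^\perp$ on which $T$ is bounded below, say $\norm{Tx}\ge c\norm{x}$ for $x\in M$ (such an $M$ of dimension $\dim K$ exists by the spectral theorem applied to $T\ge0$, taking the spectral subspace for $[c,\norm T]$ and noting that for small enough $c$ its dimension is at least $\dim K$ --- here one uses $\dim\clos{\rg T}\ge\dim K$ and that only finitely-or-countably many spectral bands can each be finite-dimensional). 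Once $\norm{T_m-T}<c/2$, the operator $T_m|_M$ is bounded below by $c/2$, hence $K_m:=T_mM$ is a closed subspace of $\rg T_m$ with $\dim K_m=\dim M=\dim K$. Finally $K_m^\perp\supset$ no larger than $K^\perp$ up to finite/countable adjustments: more carefully, $H=K_m\oplus K_m^\perp$ and the unitary-equivalence-free bookkeeping of Lemma~\ref{lem:cmp-dim} gives $\dim K_m^\perp$ in terms of $\dim H$ and $\dim K$ exactly as for $K$, so $\dim K_m>\dim K_m^\perp$, contradicting Theorem~\ref{thm:vNeu-nonsep} applied to $T_m$. Assembling these estimates into a short clean argument, and in particular justifying the existence of the bounded-below closed subspace $M$ of the right dimension, is the crux.
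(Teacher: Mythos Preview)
Your overall strategy matches the paper's: argue by contraposition, use Theorem~\ref{thm:vNeu-nonsep} to get a closed $K\subset\rg T$ with $\dim K>\dim K^\perp$, then transport this to nearby operators. Two points, however.

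First, the detour through positive operators and the spectral theorem is unnecessary. Given the closed $K\subset\rg T$, set $W:=(T|_{(\ker T)^\perp})^{-1}(K)$. This is closed (preimage of a closed set), and $T|_W\colon W\to K$ is a bounded bijection between Banach spaces, hence bounded below by the open mapping theorem. So the $M$ you are looking for is simply $W$, with no appeal to spectral subspaces.

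Second, and this is the genuine gap: your final dimension count is wrong. You assert that once $K_m:=T_mW$ is closed with $\dim K_m=\dim K$, Lemma~\ref{lem:cmp-dim} gives $\dim K_m^\perp$ ``in terms of $\dim H$ and $\dim K$ exactly as for $K$''. This is false in nonseparable spaces: if $\dim H=\aleph_1$, a closed subspace of dimension $\aleph_1$ can have orthogonal complement of any dimension from $0$ to $\aleph_1$. Knowing only $\dim K_m=\dim K$ tells you nothing about $\dim K_m^\perp$. The paper's fix is exactly the missing idea: consider the map $\widehat{T}\colon W\times K^\perp\to H$, $(x,h)\mapsto Tx+h$. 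This is a bounded bijection (surjective since $TW+K^\perp=K+K^\perp=H$; injective since $Tx\in K$, $h\in K^\perp$ force $Tx=h=0$, and $x\in(\ker T)^\perp$), hence an isomorphism. Isomorphisms are open in the uniform norm, so $\widehat{S}(x,h)=Sx+h$ is still an isomorphism for $S$ near $T$. Now surjectivity of $\widehat{S}$ gives $SW+K^\perp=H$, whence $(SW)^\perp=P_{(SW)^\perp}(K^\perp)$ and therefore $\dim(SW)^\perp\le\dim K^\perp<\dim K=\dim W=\dim SW$ by Lemma~\ref{lem:cmp-dim}. This is the step your sketch does not supply.
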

\begin{proof}
We prove that the complement of the above set of operators is open with respect to the uniform operator norm.
To this end, we argue along the lines of the proof for the openness of the set of Fredholm operators in
\cite[Theorem~XVII.2.3]{Serge93}.

Suppose that $T\in\Linop(H)$ does not admit a unitary operator $U$ such that $U\rg T\cap\rg T=\{0\}$.
By Theorem~\ref{thm:vNeu-nonsep} there exists a closed subspace $K$ of $\rg T$ such that $\dim K > \dim K^\perp$.
Define $W := (\restrict{T}{(\ker T)^\perp})^{-1}(K)$. Then $W$ is a closed subspace of $(\ker T)^\perp$ and $K=TW$.

For all $S\in\Linop(H)$ define $\widehat{S}\colon W\times K^\perp \to H$ by $\widehat{S}(x,h)=Sx+h$.
Obviously $\widehat{S}$ is a bounded operator. Moreover, note that
\begin{equation}\label{eq:Shat-est}
    \norm{\widehat{S}_1-\widehat{S}_2}_{\Linop(W\times K^\perp;H)}=\sup_{\norm{x}^2+\norm{h}^2\le 1}\norm{S_1x+h - S_2x - h}
       \le\norm{S_1-S_2}_{\Linop(H)}
\end{equation}
for all $S_1,S_2\in\Linop(H)$.
We claim that $\widehat{T}$ is an isomorphism. Obviously $\rg\widehat{T}=H$, so $\widehat{T}$ is surjective.
Suppose that $\widehat{T}(x,h)=0$ for an $x\in W$ and an $h\in K^\perp$. Then $Tx=-h$ with $Tx\in K$ and $-h\in
K^\perp$. So $h=Tx=0$. As $x\in W\subset(\ker T)^\perp$, one obtains $(x,h)=0$. Hence $\widehat{T}$ is injective.

As the set of isomorphisms between $W\times K^\perp$ and $H$ is open with respect to the uniform operator norm,
by~\eqref{eq:Shat-est} there exists an $\eps>0$ such that for all $S\in\Linop(H)$ with $\norm{T-S}_{\Linop(H)}<\eps$ 
the operator $\widehat{S}$ is an isomorphism.
Let $S\in\Linop(H)$ with $\norm{T-S}_{\Linop(H)}<\eps$.
It remains to show that $S$ does not admit a unitary operator $U$ such that $U\rg S\cap\rg S=\{0\}$.
By Theorem~\ref{thm:vNeu-nonsep} it suffices to show that $SW$ is a closed subspace of $\rg S$ with $\dim
SW>\dim(SW)^\perp$.
First observe that $SW = \widehat{S}(W\times\{0\})$ is closed in $H$.
Moreover, as $SW + K^\perp = H$, it follows that $(SW)^\perp = P(K^\perp)$, where $P$ is the orthogonal projection from $H$ onto $(SW)^\perp$.
Hence $\dim(SW)^\perp\le\dim K^\perp$ by Lemma~\ref{lem:cmp-dim}. Using Lemma~\ref{lem:cmp-dim} again one deduces that
\[
    \dim(SW)^\perp\le\dim K^\perp<\dim K=\dim\widehat{T}(W\times\{0\}) = \dim W = \dim\widehat{S}(W\times\{0\})=\dim SW.
\]
The proof is complete.
\end{proof}

By Proposition~\ref{prop:israel} every compact operator $T$ on an infinite-dimensional
separable Hilbert space admits a unitary operator $U$ such that $U\rg T\cap\rg T=\{0\}$.
Using the previous theorems and the results in~\cite{Luft68}, we shall prove that, in the setting of an arbitrary
infinite-dimensional Hilbert space, the same holds for 
every operator in any closed two-sided ideal of $\Linop(H)$ that is different from
$\Linop(H)$. 

To this end, for every cardinal $\kappa$ one defines
\[
    \cF_\kappa(H) := \{ T\in\Linop(H) : \dim\clos{\rg T}<\kappa \}
\]
and $\cC_\kappa(H) := \clos{\cF_\kappa(H)}$, where the closure is taken in $\Linop(H)$ with respect to the uniform operator
norm. Then $\cC_\kappa(H)$ is a closed two-sided $*$-ideal of $\Linop(H)$ by~\cite[Corollary~5.2]{Luft68}.
\begin{corollary}
Let $H$ be an infinite-dimensional Hilbert space.
Then for all $T\in\cC_{\dim H}(H)$ there exists a unitary operator $U$ such that $U\rg T\cap\rg T =\{0\}$.
\end{corollary}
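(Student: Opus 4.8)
The plan is to combine the stability result of Theorem~\ref{thm:clos} with the characterisation in Theorem~\ref{thm:vNeu-nonsep}. Since $\cC_{\dim H}(H)=\clos{\cF_{\dim H}(H)}$ and, by Theorem~\ref{thm:clos}, the set of operators $S\in\Linop(H)$ admitting a unitary operator $U$ with $U\rg S\cap\rg S=\{0\}$ is closed with respect to the uniform operator norm, it suffices to prove the claim for every $T\in\cF_{\dim H}(H)$; the general case then follows at once by passing to the closure.

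So I would fix $T\in\cF_{\dim H}(H)$, i.e.\ $\dim\clos{\rg T}<\dim H$, and verify Condition~\ref{en:cond-K} of Theorem~\ref{thm:vNeu-nonsep} for the operator range $\cR\coloneqq\rg T$. Let $K\subset\rg T$ be a closed subspace. Then $K\subset\clos{\rg T}$, hence $\dim K\le\dim\clos{\rg T}<\dim H$. On the other hand, $(\clos{\rg T})^\perp\subset K^\perp$, and from the identity $\dim H=\dim\clos{\rg T}+\dim(\clos{\rg T})^\perp$ together with $\dim\clos{\rg T}<\dim H$ and the arithmetic of infinite cardinals one gets $\dim(\clos{\rg T})^\perp=\dim H$. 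Therefore
\[
    \dim K^\perp\ge\dim(\clos{\rg T})^\perp=\dim H>\dim\clos{\rg T}\ge\dim K,
\]
so Condition~\ref{en:cond-K} is satisfied and Theorem~\ref{thm:vNeu-nonsep} produces the required unitary operator $U$ with $U\rg T\cap\rg T=\{0\}$.

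The argument is essentially routine; the only place needing a little care is the cardinal arithmetic step giving $\dim(\clos{\rg T})^\perp=\dim H$, which uses that if $\mu+\nu=\kappa$ with $\kappa$ infinite and $\mu<\kappa$, then $\nu=\kappa$. I do not anticipate a genuine obstacle here: the hypothesis that $H$ is infinite-dimensional ensures $\dim H$ is an infinite cardinal, so this arithmetic applies without reservation, and the finite-dimensional subtleties that would otherwise arise are absent.
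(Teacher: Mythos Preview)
Your argument is correct and follows essentially the same approach as the paper: first verify the claim for $T\in\cF_{\dim H}(H)$ via Theorem~\ref{thm:vNeu-nonsep}, then pass to the closure $\cC_{\dim H}(H)$ using Theorem~\ref{thm:clos}. The paper's proof simply asserts the first step by invoking Theorem~\ref{thm:vNeu-nonsep}, whereas you spell out the verification of Condition~\ref{en:cond-K} explicitly; the cardinal arithmetic you use is exactly what is needed and is unproblematic.
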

\begin{proof}
By Theorem~\ref{thm:vNeu-nonsep} for all $T\in\cF_{\dim H}(H)$ there exists a unitary operator $U$
such that $U\rg T\cap\rg T =\{0\}$. Now the claim follows from Theorem~\ref{thm:clos}.
\end{proof}

We show that if $\cS$ is an operator range in an infinite-dimensional Hilbert space that
admits a unitary operator $U$ with $U\cS\cap\cS=\{0\}$, then any `compact perturbation' of $\cS$ has the same property.
\begin{theorem}\label{thm:sum-pert}
Let $H$ be an infinite-dimensional Hilbert space, $\cR$ the range of an operator in $\cC_{\dim H}(H)$ and
$\cS$ an operator range in $H$ such that there exists a unitary operator $U$ with $U\cS\cap\cS=\{0\}$.
Then there exists a unitary operator $W$ such that $W(\cR+\cS)\cap(\cR+\cS)=\{0\}$.
\end{theorem}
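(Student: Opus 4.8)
The plan is to reduce to the characterisation in Theorem~\ref{thm:vNeu-nonsep}: we must show that every closed subspace $K\subset\cR+\cS$ satisfies $\dim K\le\dim K^\perp$, equivalently (since $H$ is infinite-dimensional) $\dim K^\perp=\dim H$. So fix a closed subspace $K\subset\cR+\cS$; the goal is to bound $\dim K$ in terms of a closed subspace of $\cS$ alone, paying only a ``compact'' error coming from $\cR$, and then invoke the hypothesis on $\cS$ together with Theorem~\ref{thm:vNeu-nonsep}.

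First I would set up the algebra. Write $\cR=\rg R$ with $R\in\cC_{\dim H}(H)$, and recall from the discussion after Theorem~\ref{thm:clos} that this means $\clos{\rg R}$ need not be small but $R$ is a norm-limit of operators with range of dimension $<\dim H$; what matters for us is really that $\cR$ admits a unitary $V$ with $V\cR\cap\cR=\{0\}$ (by the Corollary preceding this theorem), hence by Theorem~\ref{thm:vNeu-nonsep} every closed subspace of $\cR$ has codimension $\dim H$. Now $K\subset\cR+\cS=\rg R+\cS$. Using Proposition~\ref{prop:opran}\,\ref{en:or-sum}, write $\cR+\cS=\rg(RR^*+SS^*)^{1/2}=\rg B$ for a bounded positive $B$, where $\cS=\rg S$. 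The subtlety, flagged throughout the paper, is that intersecting a closed subspace with an operator range need not behave well; so instead I would argue as follows. Since $K$ is closed and $K\subset\rg B$, the restriction of $B$ (on $(\ker B)^\perp$, or better, Douglas' lemma) produces a bounded operator $C\in\Linop(H)$ with $\rg C=B^{-1}(K)\cap(\ker B)^\perp$ pulling $K$ back, but the cleaner route is: the closed subspace $K$ is itself an operator range (Proposition~\ref{prop:opran}\,\ref{en:or-cl}), $K\subset\cR+\cS$, and I want to ``split'' $K$ as (something inside $\cS$) plus (something controlled by $\cR$).

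The key step, which I expect to be the main obstacle, is this splitting. Here is the approach I would take. Consider the operator range $\cR+\cS$ and a sequence $(\cH_n)$ representing $\cS$. Because $\cR$ is the range of an element of $\cC_{\dim H}(H)$, I claim $\cR$ is contained in a dense operator range $\cR_0$ represented by a sequence $(\cG_n)$ with $\dim\cG_n<\dim H$ for all $n$ when $\dim\clos{\rg R}<\dim H$, and in general $\cR$ still has the property that closed subspaces of it have full-codimension (this is exactly the Corollary just before the theorem). Now apply Lemma~\ref{lem:opran-seq} to represent $\cR+\cS$ by a sequence $(\cF_n)$; by Lemma~\ref{lem:or}\,\ref{en:lor-dense} the closed subspace $K\subset\cR+\cS$ embeds in $\bigoplus\cF_k$. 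For the codimension count I would use: given the closed $K\subset\cR+\cS$, the map $K\to\cR+\cS$ factors so that there is a closed subspace $K_\cS\subset\clos{\cS}$ and a subspace of $\clos{\cR}$ with $K\subset K_\cS + \clos{\cR}$ up to a closed complement, and then $\dim K\le\dim K_\cS+\dim\clos{\cR}$; one then replaces $K_\cS$ by an honest closed subspace of $\cS$ (using that $\cR+\cS$ closed-subspace structure, via Proposition~\ref{prop:opran}\,\ref{en:or-is} applied to $K^\perp\cap(\cR+\cS)$ and Lemma~\ref{lem:or}). Granting $\dim K\le\dim K'+\dim\clos{\rg R}$ with $K'\subset\cS$ closed, the hypothesis on $\cS$ (via Theorem~\ref{thm:vNeu-nonsep}) gives $\dim (K')^\perp=\dim H$, hence $\dim K'\le\dim H$, and $\dim\clos{\rg R}\le\dim H$ trivially, so $\dim K\le\dim H+\dim H=\dim H$; since $K^\perp\supset (\clos{\cR+\cS})^\perp$ and, if $\cR+\cS$ were dense we still need $\dim K^\perp=\dim H$, which we get because $K\subsetneq H$ forces (in the infinite-dimensional setting, after checking $K\ne H$ — true since $\cR+\cS$ is an operator range, and if it equalled $H$ then by Proposition~\ref{prop:opran}\,\ref{en:or-sum-all} one of $\cR,\cS$ would contain a closed half of $H$, contradicting the hypotheses) the orthogonal complement to have dimension $\dim H$. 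I would then conclude by Theorem~\ref{thm:vNeu-nonsep} that the desired unitary $W$ exists. The delicate point throughout — and the part I would write most carefully — is justifying the inequality $\dim K\le\dim K'+\dim\clos{\rg R}$ for a closed $K\subset\rg R+\cS$, i.e.\ that a closed subspace of a sum of two operator ranges is, up to a ``piece of size $\dim\clos{\rg R}$'', already a closed subspace of $\cS$; for this I would lean on Lemma~\ref{lem:cmp-dim} applied to suitable bounded maps (as in the proof of Theorem~\ref{thm:clos}) rather than on any direct manipulation of the representing sequences.
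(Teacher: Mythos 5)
Your overall strategy---verifying Condition~\ref{en:cond-K} of Theorem~\ref{thm:vNeu-nonsep} for $\cR+\cS$ by splitting a closed subspace $K\subset\cR+\cS$ into a piece inside $\cS$ and a piece controlled by $\cR$---points in the right direction, but the argument has two genuine gaps, and the final counting step is simply wrong. From the (unproved) inequality $\dim K\le\dim K'+\dim\clos{\rg R}$ you only deduce $\dim K\le\dim H$, which holds for \emph{every} subspace of $H$ and gives no information about $\dim K^\perp$; and the assertion that $K\subsetneq H$ forces $\dim K^\perp=\dim H$ is false (take for $K$ the orthogonal complement of a single unit vector). Moreover, $\dim\clos{\rg R}$ is the wrong quantity to control the $\cR$-contribution: already for a compact injective operator with dense range on a separable space it equals $\dim H$, so any bound involving it is vacuous. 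The ingredient your proposal is missing is \cite[Theorem~5.1]{Luft68}: every \emph{closed} subspace contained in $\rg T$ with $T\in\cC_{\dim H}(H)$ has dimension strictly less than $\dim H$. That is what makes the $\cR$-part genuinely small, and it appears nowhere in your sketch.

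The splitting itself---which you correctly identify as the main obstacle and then leave unproved---is where the actual work lies, and the paper obtains it by a mechanism different from the one you gesture at. One argues by contraposition: assume there is a closed $K\subset\cR+\cS$ with $\dim K^\perp<\dim H$. Then $(\cR+K^\perp)+\cS=H$, and since $\dim K^\perp<\dim H$ the projection $P_{K^\perp}$ lies in $\cF_{\dim H}(H)$, so by the ideal property and Proposition~\ref{prop:opran}\,\ref{en:or-sum} the operator range $\cR+K^\perp$ is still the range of an operator in $\cC_{\dim H}(H)$. Proposition~\ref{prop:opran}\,\ref{en:or-sum-all} now produces closed subspaces $M_1\subset\cR+K^\perp$ and $M_2\subset\cS$ with $M_1\cap M_2=\{0\}$ and $M_1+M_2=H$; Luft's theorem yields $\dim M_1<\dim H$, and then $M_2^\perp=P_{M_2^\perp}M_1$ together with Lemma~\ref{lem:cmp-dim} gives $\dim M_2^\perp\le\dim M_1<\dim H$, contradicting the hypothesis on $\cS$ via Theorem~\ref{thm:vNeu-nonsep}. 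Note that the contrapositive formulation is not cosmetic here: it is exactly what guarantees $\dim K^\perp<\dim H$, without which $\cR+K^\perp$ need not be small and Proposition~\ref{prop:opran}\,\ref{en:or-sum-all} (which requires the sum to be all of $H$) could not be brought to bear.
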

\begin{proof}
We give a proof by contraposition. So suppose that $K\subset\cR+\cS$ is a closed subspace with $K^\perp<\dim H$.
By Theorem~\ref{thm:vNeu-nonsep} it suffices to prove that there exists a closed subspace $M_2\subset\cS$ such that
$\dim M_2^\perp<\dim H$.

Clearly $(\cR+K^\perp) + \cS=H$, where also $\cR+K^\perp$ is an operator range by
Proposition~\ref{prop:opran}\,\ref{en:or-sum}. So it follows from Proposition~\ref{prop:opran}\,\ref{en:or-sum-all} that there exist
closed subspaces $M_1,M_2$ of $H$ such that $M_1\subset\cR+K^\perp$, $M_2\subset\cS$, $M_1\cap M_2=\{0\}$ and
$M_1+M_2=H$. 
Let $T\in\cC_{\dim H}(H)$ be such that $\cR=\rg T$. Then $\cR = \rg(TT^*)^{1/2}$ by Douglas' lemma.
It follows from the ideal property of $\cC_{\dim{H}}(H)$ and uniform approximation of the square root
that also $(TT^*)^{1/2}\in\cC_{\dim H}(H)$. So we may assume that $\cR=\rg T$ with $T\in\cC_{\dim H}(H)$ positive.
By Proposition~\ref{prop:opran}\,\ref{en:or-sum} the operator $A := (T^2 + P_{K^\perp})^{1/2}$ has range $\cR+K^\perp$,
where $P_{K^\perp}$ is the orthogonal projection onto $K^\perp$.
As before we obtain $A\in\cC_{\dim H}(H)$.
It follows from~\cite[Theorem~5.1]{Luft68} that $\dim M_1 <\dim H$ as $M_1$ is a closed subset of $\rg A=(\cR+K^\perp)$.
Moreover,
\[
    M_2^\perp  = P_{M_2^\perp}(M_1+M_2) = P_{M_2^\perp} M_1,
\]
where $P_{M_2^\perp}$ is the orthogonal projection onto $M_2^\perp$.
Therefore $\dim M_2^\perp=\dim(P_{M_2^\perp}M_1)\le\dim M_1<\dim H$, where we used Lemma~\ref{lem:cmp-dim} for the first
inequality.
This concludes the proof.
\end{proof}

The following is an immediate consequence of Theorem~\ref{thm:sum-pert} and strengthens the density result in
Proposition~\ref{prop:israel}.
\begin{corollary}\label{cor:dense-cpt-per}
Let $H$ be an infinite-dimensional Hilbert space, $\cR$ the range of an operator in $\cC_{\dim H}(H)$ and
$\cS$ an operator range in $H$ such that there exists a unitary operator $V$ with $V\cS\cap\cS=\{0\}$.
Then the set
\[
    \cG := \{U\in\cU : U\cR\cap\cS =\{0\}\}
\]
is dense in $\cU$ with respect to the uniform operator norm.
\end{corollary}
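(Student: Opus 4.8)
The plan is to combine Theorem~\ref{thm:sum-pert} with the group construction of Corollary~\ref{cor:group} and Remark~\ref{rem:fdim-fam}, after a reduction that moves the problem into a neighbourhood of an arbitrarily prescribed unitary. Fix $U_0\in\cU$ and $\eps>0$; the goal is to produce $U\in\cG$ with $\norm{U-U_0}<\eps$. First I would observe that $U_0\cR$ is again the range of an operator in $\cC_{\dim H}(H)$: if $T\in\cC_{\dim H}(H)$ with $\rg T=\cR$, then $U_0T\in\cC_{\dim H}(H)$ since $\cC_{\dim H}(H)$ is a two-sided ideal (\cite[Corollary~5.2]{Luft68}), and $\rg(U_0T)=U_0\rg T=U_0\cR$.

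Next I would apply Theorem~\ref{thm:sum-pert} to the pair $(U_0\cR,\cS)$ — legitimate, since $\cS$ admits the disjointing unitary $V$ — to obtain a unitary $W$ with $W(U_0\cR+\cS)\cap(U_0\cR+\cS)=\{0\}$. Thus $U_0\cR+\cS$ satisfies the equivalent conditions of Theorem~\ref{thm:vNeu-nonsep}, so Corollary~\ref{cor:group} applies to $U_0\cR+\cS$; since $H$ is infinite-dimensional, Remark~\ref{rem:fdim-fam} upgrades the interval to all of $\RR$, yielding a uniformly continuous unitary group $(W_t)_{t\in\RR}$ with $W_t(U_0\cR+\cS)\cap W_s(U_0\cR+\cS)=\{0\}$ for all $t\ne s$. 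Taking $s=0$ and using $W_0=I$ gives $W_t(U_0\cR+\cS)\cap(U_0\cR+\cS)=\{0\}$ for every $t\ne0$; in particular, since $W_t U_0\cR\subset W_t(U_0\cR+\cS)$ and $\cS\subset U_0\cR+\cS$, one has $W_t U_0\cR\cap\cS=\{0\}$ for all $t\ne0$.

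Finally I would set $U:=W_t U_0\in\cU$; then $U\cR\cap\cS=W_t U_0\cR\cap\cS=\{0\}$, so $U\in\cG$, while $\norm{U-U_0}=\norm{(W_t-I)U_0}=\norm{W_t-I}$, which tends to $0$ as $t\to0$ by uniform continuity of the group. Choosing $t\ne0$ small enough that $\norm{W_t-I}<\eps$ then completes the argument, establishing the density of $\cG$ in $\cU$ for the uniform operator norm.

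I do not expect a genuine obstacle here, since the result is essentially a bookkeeping combination of Theorem~\ref{thm:sum-pert} and Corollary~\ref{cor:group}; the only points needing care are the use of the ideal property in the reduction to a neighbourhood of $U_0$ and the correct chaining of the two results. If one prefers to avoid invoking Remark~\ref{rem:fdim-fam}, it suffices to pick any interior point $s_0$ of the uncountable interval $I$ furnished by Corollary~\ref{cor:group} and to use $W_{s_0}^{-1}W_t=W_{t-s_0}$ with $t\in I$ approaching $s_0$.
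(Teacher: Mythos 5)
Your proposal is correct and follows essentially the same route as the paper: apply Theorem~\ref{thm:sum-pert} to a translated pair of operator ranges, then use Corollary~\ref{cor:group} (with Remark~\ref{rem:fdim-fam}) to choose the resulting disjointing unitary arbitrarily close to the identity, and compose with the prescribed unitary. The only cosmetic difference is that you translate $\cR$ by $U_0$ on the left (checking via the ideal property that $U_0\cR$ stays in $\cC_{\dim H}(H)$), whereas the paper translates $\cS$ by $V^*$; both reductions are equally valid.
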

\begin{proof}
Let $\eps>0$ and $V\in\cU$.
By Theorem~\ref{thm:sum-pert} there exists a unitary operator $W$ such that $W(\cR +V^*\cS)\cap(\cR+V^*\cS)=\{0\}$. 
It follows from Corollary~\ref{cor:group} that we can ensure $\norm{I-W}<\eps$.
Alternatively, the latter also follows in a more self-contained way from Proposition~\ref{prop:israel} 
and the construction at the end of the proof of Proposition~\ref{prop:char}.
Hence $W\cR \cap V^*\cS=\{0\}$, or equivalently with $U := VW$, one has $U\cR\cap\cS=\{0\}$.
As $\norm{V-U} = \norm{I-W}<\eps$, the claim follows.
\end{proof}

In~\cite[bottom of p.\,229]{vNeu29} von Neumann gives the following interpretation of his findings related to
Theorem~\ref{thm:vNeu-dom}:
\begin{quote}
Noch eher sind diejenigen unit\"aren Matrizen als pathologisch zu bezeichnen die unsere paradoxen [\dots] 
\"Aquivalenzen vermitteln, trotzdem gerade diese beschr\"ankt sind!
\end{quote}
So he attributes the `pathological' phenomenon in Theorem~\ref{thm:vNeu-dom} to the richness of the unitary operators.
Looking at Corollary~\ref{cor:dense-cpt-per} or Israel's result in Proposition~\ref{prop:israel}, one might -- in this
line of thought -- expect that
if $\cR$ is an operator range that admits a unitary operator $U$ such that $U\cR\cap\cR=\{0\}$ one automatically has
that the set
\begin{equation}\label{eq:set-G}
    \cG = \{U\in\cU : U\cR\cap\cR=\{0\}\}
\end{equation}
is dense in the unitary operators with respect to the uniform operator norm.
Somewhat surprisingly, we shall prove that this expectation is unfounded.

We need the following lemma, which is inspired by~\cite[Theorem~2.4]{FW71:op-rg}.
\begin{lemma}\label{lem:dense-impos}
Let $H$ be an infinite-dimensional Hilbert space, $\cR$ a dense operator range and $V$ a unitary operator such that $\cR+V\cR=H$.
Then there exists an $\eps>0$ such that for all unitary operators $W$ with $\norm{I-W}<\eps$ one has
$\cR+WV\cR=H$.
\end{lemma}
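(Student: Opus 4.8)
The plan is to exploit the hypothesis that $\cR + V\cR = H$ via Proposition~\ref{prop:opran}\,\ref{en:or-sum-all}: there exist closed subspaces $M_1 \subset \cR$ and $M_2 \subset V\cR$ with $M_1 \cap M_2 = \{0\}$ and $M_1 + M_2 = H$. Since $M_1 + M_2 = H$ algebraically with $M_1, M_2$ closed, this is a \emph{topological} direct sum decomposition of $H$, so the projection $P$ onto $M_1$ along $M_2$ is bounded. The idea is that $\cR + WV\cR = H$ will follow once we know $\cR$ contains a closed subspace $\widetilde M_1$ and $WV\cR$ contains a closed subspace $\widetilde M_2$ that still sum to $H$; and for $W$ close to the identity we should be able to take $\widetilde M_1 = M_1$ and $\widetilde M_2 = WM_2'$, where $M_2' := V^{-1}M_2 \subset \cR$ is the closed subspace with $VM_2' = M_2$.

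First I would set $M_2' := V^{-1}M_2$, a closed subspace of $\cR$, so that $M_2 = VM_2'$ and $WV M_2' = W M_2$. Then $WVM_2' = WM_2 \subset WV\cR$, and $M_1 \subset \cR$, so it suffices to show $M_1 + WM_2 = H$ for all unitary $W$ sufficiently close to $I$. Now $M_1 + M_2 = H$ with the bounded skew projection $P$ onto $M_1$ along $M_2$; equivalently, the bounded operator $\Phi_W \colon M_1 \times M_2 \to H$, $(x,y) \mapsto x + Wy$, is onto precisely when $M_1 + WM_2 = H$. For $W = I$ the operator $\Phi_I$ is a bounded bijection (by $M_1 \cap M_2 = \{0\}$ and $M_1 + M_2 = H$), hence an isomorphism by the open mapping theorem. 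Since $\norm{\Phi_W - \Phi_I}_{\Linop(M_1 \times M_2; H)} = \sup_{\norm x^2 + \norm y^2 \le 1}\norm{(W-I)y} \le \norm{W - I}$, and the set of isomorphisms $M_1 \times M_2 \to H$ is open in the uniform norm, there is an $\eps > 0$ such that $\norm{W - I} < \eps$ forces $\Phi_W$ to be an isomorphism, in particular surjective. Then $H = \rg \Phi_W = M_1 + WM_2 = M_1 + WVM_2' \subset \cR + WV\cR$, as desired. (This is exactly the perturbation argument used in the proof of Theorem~\ref{thm:clos}, applied here to the pair $M_1, M_2$ instead of $W, K^\perp$.)

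The main technical point to be careful about — and the only place the argument could go wrong — is the passage from the \emph{algebraic} identity $M_1 + M_2 = H$ to the statement that $\Phi_I$ is an isomorphism; this is where closedness of $M_1$ and $M_2$ is essential, together with the open mapping theorem, and where Proposition~\ref{prop:opran}\,\ref{en:or-sum-all} does the real work by producing closed (rather than merely operator-range) complements inside $\cR$ and $V\cR$. Everything else is the routine estimate on $\norm{\Phi_W - \Phi_I}$ and the openness of the set of isomorphisms. I would also note that the density hypothesis on $\cR$ is not actually needed for this argument and is presumably included only because it is the situation in which the lemma will be applied (Example~\ref{ex:ce-dens}).
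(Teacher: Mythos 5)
Your argument is correct, and it reaches the conclusion by a genuinely different route from the paper. The paper's proof is quantitative: it takes a positive $T$ with $\rg T=\cR$, identifies $\cR+V\cR$ with $\rg (T^2+VT^2V^*)^{1/2}$ via Proposition~\ref{prop:opran}\,\ref{en:or-sum}, deduces from the surjectivity a lower bound $\delta^2\norm{x}^2\le\norm{Tx}^2+\norm{TV^*x}^2$, and shows that this bound (with constant $\delta^2/4$) survives the replacement of $V$ by $WV$ whenever $\norm{I-W}<\delta/(2\norm{TV^*})$; this forces $\cR+WV\cR$ to be closed, and density of $\cR$ then gives $\cR+WV\cR=H$. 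You instead invoke the structural result Proposition~\ref{prop:opran}\,\ref{en:or-sum-all} to extract closed subspaces $M_1\subset\cR$ and $M_2\subset V\cR$ with $M_1\cap M_2=\{0\}$ and $M_1+M_2=H$, pull $M_2$ back to the closed subspace $M_2'=V^*M_2\subset\cR$, and run the open-mapping-theorem/openness-of-isomorphisms perturbation argument on $\Phi_W(x,y)=x+Wy$ --- the same device used in the proof of Theorem~\ref{thm:clos}. Both proofs are sound. Your version has the small advantages that it does not use the density of $\cR$ at all (so the hypothesis is indeed redundant, as you observe) and that it exhibits an explicit closed complemented pair witnessing $\cR+WV\cR=H$; the paper's version is more self-contained in that it avoids appealing to \ref{en:or-sum-all} and produces an explicit value of $\eps$ in terms of $T$ and $V$.
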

\begin{proof}
Let $T\in\Linop(H)$ be a positive operator such that $\rg T=\cR$.
By the assumption and Proposition~\ref{prop:opran}\,\ref{en:or-sum} the positive operator $(T^2 + VT^2V^*)^{1/2}$ has
range $H$ and therefore is invertible.
So there exists a $\delta>0$ such that
\begin{equation}\label{eq:sum-H-lb}
    \delta^2\norm{x}^2 \le \scalar{(T^2 + VT^2V^*)x}{x} = \norm{Tx}^2 + \norm{TV^*x}^2
\end{equation}
for all $x\in H$.
Set $\eps := \frac{\delta}{2\norm{TV^*}}>0$. Let $W$ be a unitary operator such that $\norm{I-W}<\eps$.
Then
\begin{align*}
    \norm{TV^*x} &\le \norm{TV^*W^*x} + \norm{TV^*(I-W^*)x} \\
        &\le \norm{TV^*W^*x} +\norm{TV^*}\norm{I-W}\norm{x} \\
        &\le\norm{TV^*W^*x} + \frac{\delta}{2}\norm{x}
\end{align*}
By plugging the above into~\eqref{eq:sum-H-lb} we obtain
\[
    \delta^2\norm{x}^2\le\norm{Tx}^2 + 2\norm{TV^*W^*x}^2 + 2\frac{\delta^2}{4}\norm{x}^2
\]
and therefore
\begin{equation}\label{eq:low-clos}
    \frac{\delta^2}{4}\norm{x}^2\le\norm{Tx}^2 + \norm{WVTV^*W^*x}^2 = \scalar{(T^2 + WVT^2V^*W^*)x}{x}
\end{equation}
for all $x\in H$.
As $\rg(T^2+WVT^2V^*W^*)^{1/2}=\cR+WV\cR$ by Proposition~\ref{prop:opran}\,\ref{en:or-sum},
it follows from~\eqref{eq:low-clos} that $\cR+WV\cR$ is closed.
Since $\cR$ is dense, one has $\cR+WV\cR=H$.
\end{proof}

The following example shows that in general density of $\cG$ in~\eqref{eq:set-G} cannot even be expected in the separable case.
\begin{example}\label{ex:ce-dens}
Suppose that $H$ is an infinite-dimensional Hilbert space such that
\[
    H = \bigoplus_{k=2}^\infty \cK_k\oplus\bigoplus_{k=2}^\infty\cH_k
\]
with $\dim\cK_k=\dim\cH_k=\dim H$ for all $k\in\NN\setminus\{1\}$. Set $\cH_1 := \bigoplus_{k=2}^\infty\cK_k$
and $\cK_1 := \bigoplus_{k=2}^\infty\cH_k$. 
Clearly one can choose $H$ to be separable.

Let $\cR$ be the operator range represented by $(\cH_n)$ and $\cS$ be the operator range represented by $(\cK_n)$.
By Lemma~\ref{lem:or}\,\ref{en:lor-unequi} there exists a unitary operator $V$ such that $V\cS=\cR$.
Moreover, by Proposition~\ref{prop:char} there exists a unitary operator $U$ such that $U\cR\cap\cR=\{0\}$.

Observe that $\cR+V\cR = \cR + \cS = H$. It follows from Lemma~\ref{lem:dense-impos} that there exists an $\eps>0$ such that
$\cR + WV\cR = H$ for all unitary operators $W$ with $\norm{I-W}<\eps$.
But if $W$ is a unitary operator such that $\cR+WV\cR=H$, then it not possible that $\cR\cap WV\cR=\{0\}$ by
Lemma~\ref{prop:opran}\,\ref{en:or-sum-clos} and because $\cR$ is not closed.
We have proved that $V$ is not in the closure of the set $\cG$ in~\eqref{eq:set-G} with respect to the uniform operator norm.
\end{example}

\begin{remark}
Several recent results on operator ranges in~\cite{AZ2015} can immediately be extended to the nonseparable case
provided the corresponding operator range satisfies Condition~\ref{en:cond-K} in Theorem~\ref{thm:vNeu-nonsep}.
In particular, this applies for example to~\cite[Theorems~3.7, 3.12 and 3.19]{AZ2015}.
As a consequence of these results, if $\cR$ is an operator range in $H$ such that there exists a unitary operator
$U$ with $U\cR\cap\cR=\{0\}$, then there exist uncountably many such operators $U$ that are both \emph{unitary and
self-adjoint}.
For the finite-dimensional case, we pointed this out in the last part of Remark~\ref{rem:fdim-fam}.
\end{remark}

\subsection*{Acknowledgements} 

The second named author is
most grateful for the hospitality extended to him during a research visit at the University of Auckland.
Part of this work was supported by the Marsden Fund Council from Government
funding, administered by the Royal Society of New Zealand.

\microtypesetup{disable}
\providecommand{\bysame}{\leavevmode\hbox to3em{\hrulefill}\thinspace}

\end{document}